\newtheorem{theorem}{Theorem}[section]
\newtheorem{lemma}[theorem]{Lemma}
\theoremstyle{definition}
\newtheorem{definition}[theorem]{Definition}
\newtheorem{remark}[theorem]{Remark}
\numberwithin{theorem}{section}
\numberwithin{equation}{section}
\newcommand{\D}{\mathcal{D}}
\newcommand{\R}{\mathbb{R}}
\newcommand{\overbar}[1]{\mkern 1.5mu\overline{\mkern-1.5mu#1\mkern-1.5mu}\mkern 1.5mu}
\def\@settitle{
  \begin{center}
    \normalfont\LARGE\bfseries
    \@title
  \end{center}
}
\def\@setauthors{
  \begingroup
  \trivlist
  \centering\large \@topsep30\p@\relax
  \advance\@topsep by -\baselineskip
  \item\relax
  \author@andify\authors
  \def\\{\protect\linebreak}
  {\authors}
  \endtrivlist
  \endgroup
}
\newcommand{\ttl}{\Large Weighted Sobolev inequalities and superlinear elliptic problems on exterior domains}
\begin{document}
\title[Weighted Sobolev inequalities and superlinear problems]
{\ttl}

\author{Ardra A$^1$ } 
\author{Ameerraja Ansari$^2$}
\author{Anumol Joseph$^3$}
\author{Lakshmi Sankar$^{4, *}$}
\address{$^*$ Corresponding author}
\address{$^{1,2,4}$ Department of Mathematics, Indian Institute of Technology Palakkad, Kerala-678623, India}
\address{$^3$ Department of Mathematics, SRM University-AP, Amaravati-  522240, Andhra Pradesh, India}
\email{ardra.math@gmail.com}
\email{212214001@smail.iitpkd.ac.in}
\email{anumol.j@srmap.edu.in}
\email{lakshmi@iitpkd.ac.in}

\begin{abstract} 
Let $B_1 ^c = \{ x\in \mathbb{R}^N: |x|>1 \}, N \geq 2$, and $\mathcal{D}^{1,N}_0(B^c_1)$, be the Beppo-Levi space. We prove that  $\mathcal{D}^{1,N}_0(B^c_1)$ is compactly embedded into the weighted Lebesgue space $L^r(B_1^c;K(x))$ for all $r\in[1,\infty)$ for an appropriate class of weight functions $K$. 
As an application, we prove the existence of a positive solution to a superlinear semipositone problem on $B_1 ^c$ in $\mathbb{R}^2$. We also establish boundedness and  regularity of solutions of certain boundary value problems and derive their Green's function representation. 
\end{abstract}

\keywords{}
\subjclass[2020]{Primary ; Secondary }

\maketitle

\section{Introduction}

Let $B_1 ^c = \{ x\in \mathbb{R}^N: |x|>1 \}$, $N \geq 2$, and $\mathcal{D}^{1,p}_0(B^c_1), p>1$ be the Beppo-Levi space, which is the completion of compactly supported smooth functions on $B_1 ^c$ with respect to the norm $||u||= (\int_{B_1^c} |\nabla u|^p dx)^{\frac{1}{p}}$. For $1<p<N$, the classical Sobolev embedding theorem guarantees that $\mathcal{D}^{1,p}_0(B^c_1)$ is continuously embedded in $L^{p^*}(B_1^c),$ where $p^*=\frac{Np}{N-p}$ is the critical Sobolev exponent.
It is also known that $\mathcal{D}^{1,p}_0(B^c_1)$ is a well defined reflexive Banach space for all $p >1$ (see \cite{MR3347486}, \cite{MR1454361}). Recently, there have been several studies on the continuous and compact embeddings of $\mathcal{D}^{1,p}_0(B^c_1)$.
 In this work, we are particularly interested in the embeddings of $\mathcal{D}^{1,p}_0(B^c_1)$, in the borderline case $p=N$. With this in mind, we now review some key results from the literature that address $p=N$.

It was shown in \cite{MR1454361} (see Theorems I.2.7, I.2.16) that $\mathcal{D}^{1,N}_0(B^c_1)$ coincides with
$\mathcal{\hat{D}}^{1,N}_0(B^c_1)$, given by
\begin{eqnarray*}
\mathcal{\hat{D}}^{1,N}_0(B^c_1) &=& \{u \in L^{1, N}(B_1^c) : u \in L^N (B_1^c \cap B_R), \forall R >1, \mbox {and} \\
& & \eta u \in W^{1, N}_0(B_1^c) \mbox{~for any~} \eta \in C_c^\infty(\mathbb{R}^N) \},
\end{eqnarray*}
where $B_R$ is a ball of radius $R$, and 
$$L^{1, N}(B_1^c) =\{u \in L^1_{loc}(B_1^c) : \nabla u \in [L^N(\Omega)]^N\}.$$
Here, $W^{1, N}_0(B_1^c)$ denotes the standard Sobolev space. 
Using this characterisation, it can be seen that a function in $\mathcal{D}^{1,N}_0(B^c_1)$ may not be in $L^q(B_1^c)$ for any $q \geq 1$. For example, when $N=2$, the function $u(x)=1-\frac{1}{|x|^2}, x \in B_1^c$ belongs to $\mathcal{\hat{D}}^{1,2}_0(B^c_1)$, but $u \notin L^q(B_1^c)$ for any $q \geq 1$. 

In \cite{MR3347486}, the authors proved the compact embedding of  $\mathcal{D}^{1,N}_0(B^c_1)$ into $L^N(B_1^c, K(|x|))$ for $K$ belonging to the weighted Lebesgue space $L^1((1, \infty); (r \log r)^{N-1})$. The case $N=2$ was considered in \cite{MR3925556}, wherein the Kelvin transform $\mathcal{K}$, defined by $\hat{u}(x)=(\mathcal{K} u)(x)=u(\frac{x}{|x|^2})$, was shown to be an order-preserving, isometric isomorphism between $H=H_0^1(B_1)$ and  $\mathcal{D}^{1,2}_0(B^c_1)$ (see Theorem 2.6). Using this isomorphism, the authors further established a continuous embedding of $\mathcal{D}^{1,2}_0(B^c_1)$ into the weighted Lebesgue space $L^r(B_1^c, K(x))$, for every $1 \leq r < \infty$ with weight function $K(x)=\frac{1}{|x|^4}$ (see Proposition 2.9). In the first part of this paper, we prove a compact embedding of $\mathcal{D}^{1,N}_0(B^c_1)$ into $L^r(B_1^c, K(x))$, for any $r$ with $1 \leq r < \infty$, for a broader class of weight functions $K(x)$ that includes the weight $\frac{1}{|x|^4}$.  Our approach relies on techniques from the celebrated work of Caffarelli–Kohn–Nirenberg \cite{MR768824}. 
The precise decay condition assumed on the weight function $K(x)$ is as follows:
\begin{enumerate}
\item[(H1)] There exist $C>0$ and $\gamma >N$ such that $K(x) \leq \frac{C}{|x|^{\gamma}}$ for $|x| >1$.
\end{enumerate}
To establish the embedding results, we first derive an interpolation inequality for functions in $C_c^\infty(B_1^c)$, the space of all compactly supported smooth functions on $B_1^c$, which we state below. By a standard density argument, this inequality implies the continuous embedding of $\mathcal{D}^{1,N}_0(B^c_1)$ into $L^r(B_1^c, K(x))$, for any $r$ with $N < r < \infty$.
\begin{theorem}
\label{inequality}
Let $B_1^c$ be the exterior of the unit ball in $\mathbb{R}^N, N \geq 2$. Assume that $0 < a < 1,$ and $ \theta >0$, and let $\delta=-(N+\theta(1-a))$. 
Then
\begin{equation}
\left( \int_{B_1^c}|x|^{-(N+\theta)}|u|^r \right)^\frac{1}{r}\leq C \left( \int_{B_1^c}|\nabla u|^N \right)^\frac{a}{N} \left(\int_{B_1^c}|x|^{-(N+\theta)}|u|^N \right)^\frac{(1-a)}{N}+C \left( \int_{B_1^c}|x|^\delta|u|^N \right)^\frac{1}{N}
\end{equation}
for all $u\in C_c^\infty(B_1^c)$ and $N<r<\infty$.
\end{theorem}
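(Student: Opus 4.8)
\emph{Strategy.} The right-hand side is a Gagliardo--Nirenberg--Caffarelli--Kohn--Nirenberg interpolation estimate, and the term $C\big(\int_{B_1^c}|x|^{\delta}|u|^N\big)^{1/N}$ is the ``lower order'' term forced by the fact that $B_1^c$ is not dilation invariant. I would therefore localise. Split $B_1^c$ into the dyadic shells $A_j=\{2^j\le|x|<2^{j+1}\}$, $j\ge 0$; since $u\in C_c^\infty(B_1^c)$, only finitely many $A_j$ meet the support of $u$, so all the sums below are finite. For each $j$, rescale $A_j$ to the fixed shell $A_0=\{1\le|x|<2\}$ by $x\mapsto 2^{-j}x$, apply an unweighted Gagliardo--Nirenberg inequality on $A_0$ (which, $A_0$ being bounded, carries a boundary term), transfer the resulting estimate back to $A_j$ while keeping careful track of the powers of $2^j$ introduced by the weight $|x|^{-(N+\theta)}$ and by the Jacobian, and sum in $j$. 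An alternative first move is the Kelvin transform $u\mapsto u(\cdot/|\cdot|^2)$, which preserves the $N$-Dirichlet energy and maps $B_1^c$ onto $B_1$; it converts the statement into a genuinely Caffarelli--Kohn--Nirenberg power-weighted inequality on the ball, singular at the origin, to which the methods of \cite{MR768824} apply directly. I will follow the dyadic route. Finally, once the inequality is known on $C_c^\infty(B_1^c)$, the embedding $\mathcal{D}^{1,N}_0(B_1^c)\hookrightarrow L^r(B_1^c,K(x))$ for $K$ satisfying (H1) follows by density together with $K(x)\le C|x|^{-\gamma}$, $\gamma>N$.

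\emph{The local step.} Fix $j$, set $\tilde u(y)=u(2^jy)$ for $y\in A_0$, and write $G_j=\int_{A_j}|\nabla u|^N$ and $U_j=\int_{A_j}|u|^N$. Because $p=N$ is the borderline exponent, $\int_{A_0}|\nabla\tilde u|^N=G_j$ is unchanged by the dilation, whereas $\int_{A_0}|\tilde u|^q=2^{-jN}\int_{A_j}|u|^q$ for every $q\ge1$. On the bounded smooth domain $A_0$ the Gagliardo--Nirenberg inequality $\|\tilde u\|_{L^r(A_0)}\le C\|\nabla\tilde u\|_{L^N(A_0)}^{a}\|\tilde u\|_{L^N(A_0)}^{1-a}+C\|\tilde u\|_{L^N(A_0)}$ holds for the given $a,r$; it can be read off from the Sobolev embedding $W^{1,N}(A_0)\hookrightarrow L^q(A_0)$ (valid for all $q<\infty$) and Hölder interpolation between $L^N(A_0)$ and $L^q(A_0)$. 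Undoing the dilation, raising to the power $r$, and using $|x|^{-(N+\theta)}\le 2^{-j(N+\theta)}$ on $A_j$, one collects the exponents of $2^j$ and obtains
\[
\int_{A_j}|x|^{-(N+\theta)}|u|^r\ \le\ C\,2^{-j(\theta+r(1-a))}\,G_j^{\,ar/N}\,U_j^{\,(1-a)r/N}\ +\ C\,2^{-j(\theta+r)}\,U_j^{\,r/N}.
\]

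\emph{Summation, and where the difficulty lies.} Write $M_j=\int_{A_j}|x|^{-(N+\theta)}|u|^N$, $L_j=\int_{A_j}|x|^{\delta}|u|^N$, and let $G,U,M,L$ denote the sums over $j$. On $A_j$ one has $|x|^{-(N+\theta)}\simeq 2^{-j(N+\theta)}$ and $|x|^{\delta}\simeq 2^{j\delta}$, so $U_j\le C\,2^{j(N+\theta)}M_j$ and $U_j\le C\,2^{-j\delta}L_j$; substituting the first bound into the first term of the local estimate and the second into the second term, and using $\delta=-(N+\theta(1-a))$, both terms acquire the \emph{same} geometric factor $2^{\,j\,\frac{\theta}{N}((1-a)r-N)}$. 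This factor is bounded for $j\ge0$ precisely when $r(1-a)\le N$ — the natural compatibility relation, which one sees is necessary by testing the inequality with $u(\lambda\,\cdot)$ and letting $\lambda\to0$ — and under it
\[
\int_{B_1^c}|x|^{-(N+\theta)}|u|^r\ \le\ C\sum_{j\ge0}G_j^{\,ar/N}M_j^{\,(1-a)r/N}\ +\ C\sum_{j\ge0}L_j^{\,r/N}.
\]
Since $r>N$, the inclusion $\ell^1\subset\ell^{r/N}$ bounds the last sum by $L^{r/N}$; for the first, since $\tfrac{ar}{N}+\tfrac{(1-a)r}{N}=\tfrac rN>1$, one factors out the total exponent $\tfrac rN$ and applies Hölder's inequality for series (whose exponents now sum to $1$) to get $\sum_j G_j^{ar/N}M_j^{(1-a)r/N}\le G^{ar/N}M^{(1-a)r/N}$. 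Taking $r$-th roots and using $(s+t)^{1/r}\le s^{1/r}+t^{1/r}$ yields the stated inequality. The genuinely delicate part is exactly this last stretch: verifying summability of the geometric weights (which pins down the admissible range of $r$ via $r(1-a)\le N$) and checking that the residual $j$-sums collapse into products of the three global quantities with no loss; it is also essential that the gradient term stays unweighted, and this hinges on the dilation invariance peculiar to the exponent $N$.
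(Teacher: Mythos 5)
Your proposal is correct and follows essentially the same route as the paper: the paper's Lemma~2.1 is exactly your local step on the rescaled annulus (with the mean-value/Poincar\'e correction producing the additive $\|u\|_{L^N}$ term), and the summation over shells uses the same two elementary series inequalities $\sum x_k^c y_k^d\le(\sum x_k)^c(\sum y_k)^d$ for $c+d\ge1$ and $\sum x_k^c\le(\sum x_k)^c$ for $c\ge1$. Two small remarks. First, the paper's dimensional-balance condition forces $r=N/(1-a)$, i.e.\ the \emph{equality} case $(1-a)r=N$ of your compatibility relation; your scaling argument showing that $(1-a)r\le N$ is necessary is a useful clarification of the theorem's hypotheses, which as literally stated treat $a$ and $r$ as independent. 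Second, precisely at that equality case your justification of the local inequality $\|\tilde u\|_{L^r(A_0)}\le C\|\nabla\tilde u\|_{L^N}^{a}\|\tilde u\|_{L^N}^{1-a}+C\|\tilde u\|_{L^N}$ via ``$W^{1,N}(A_0)\hookrightarrow L^q(A_0)$ for finite $q$ plus H\"older interpolation'' does not close (it would require $q=\infty$); one must invoke Nirenberg's interpolation inequality itself at the borderline exponent $p=N$, which is what the paper does by citing \cite{MR109940}. The inequality you need is true and standard, so this is a citation-level fix rather than a structural gap.
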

Using Theorem \ref{inequality}, and the known embeddings, we prove the following.
\begin{theorem}
\label{embedding} \label{Theorem:Compact embedding} 
Let $B_1^c$ be the exterior of the unit ball in $\mathbb{R}^N, N \geq 2$, and $K : B_1^c \to \mathbb{R}^+$ be a continuous function satisfying $(H1)$. Then the Beppo-Levi space $\mathcal{D}^{1,N}_0(B^c_1)$ is continuously embedded into $L^r(B_1^c;K(x))$ and for all $r\in [1, \infty)$. In addition, this embedding is compact. 
\end{theorem}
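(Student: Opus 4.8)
The plan is to derive everything from the interpolation inequality of Theorem~\ref{inequality} together with the weighted $L^N$-embedding established in \cite{MR3347486}. Write $\|u\|=\bigl(\int_{B_1^c}|\nabla u|^N\bigr)^{1/N}$ for the norm on $\mathcal{D}^{1,N}_0(B_1^c)$, and recall from \cite{MR3347486} that, for every $s>N$, the radial weight $\rho\mapsto\rho^{-s}$ lies in $L^1((1,\infty);(\rho\log\rho)^{N-1})$ (since $\int_1^\infty\rho^{N-1-s}(\log\rho)^{N-1}\,d\rho<\infty$ exactly when $s>N$), so that $\mathcal{D}^{1,N}_0(B_1^c)$ is compactly, hence continuously, embedded into $L^N(B_1^c;|x|^{-s})$ for every $s>N$. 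These are the only external ingredients needed.

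\emph{Continuous embedding.} Let $K$ satisfy $(H1)$ with exponent $\gamma>N$, and fix $r$ with $N<r<\infty$. Put $\theta=\gamma-N>0$ and choose any $a\in(0,1)$, so that $\delta=-(N+\theta(1-a))$ satisfies $-\delta=N+\theta(1-a)>N$. For $u\in C_c^\infty(B_1^c)$, Theorem~\ref{inequality} gives
\[
\Bigl(\int_{B_1^c}|x|^{-\gamma}|u|^r\Bigr)^{1/r}\le C\|u\|^{a}\Bigl(\int_{B_1^c}|x|^{-\gamma}|u|^N\Bigr)^{(1-a)/N}+C\Bigl(\int_{B_1^c}|x|^{\delta}|u|^N\Bigr)^{1/N},
\]
and since $\gamma>N$ and $-\delta>N$, both integrals on the right-hand side are at most $C\|u\|^N$ by the embedding recalled above; hence $\bigl(\int_{B_1^c}|x|^{-\gamma}|u|^r\bigr)^{1/r}\le C\|u\|$. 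Using $K(x)\le C|x|^{-\gamma}$ we obtain $\|u\|_{L^r(B_1^c;K)}\le C\|u\|$ for all $u\in C_c^\infty(B_1^c)$, and a density argument extends this to $\mathcal{D}^{1,N}_0(B_1^c)$, the limit being correctly identified because BL-norm convergence forces $L^N$, hence a.e., convergence on each bounded annulus via the Poincaré inequality for functions vanishing on $\partial B_1$. For $1\le r\le N$ the claim is immediate: $(H1)$ with $\gamma>N$ makes $K\,dx$ a finite measure on $B_1^c$, so $L^{r_0}(B_1^c;K)\hookrightarrow L^r(B_1^c;K)$ for any $r_0\ge r$, and we take $r_0>N$. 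This settles the continuous embedding for all $r\in[1,\infty)$.

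\emph{Compactness.} Fix $r\in[1,\infty)$ and let $(u_n)$ be bounded in $\mathcal{D}^{1,N}_0(B_1^c)$, say $\|u_n\|\le M$; for $R>1$ write $A_R=\{1<|x|<R\}$. Since $|x|^{-\gamma}\ge R^{-\gamma}$ on $A_R$, the continuous embedding into $L^N(B_1^c;|x|^{-\gamma})$ yields $\int_{A_R}|u_n|^N\le R^{\gamma}\int_{B_1^c}|x|^{-\gamma}|u_n|^N\le CR^{\gamma}M^N$, and with $\|\nabla u_n\|_{L^N(A_R)}\le M$ this shows $(u_n)$ is bounded in $W^{1,N}(A_R)$, hence precompact in $L^r(A_R)$ by Rellich--Kondrachov; a diagonal extraction over $R\in\mathbb{N}$ gives a subsequence (not relabeled) converging in $L^r(A_R)$ for every $R$. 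For the tail, pick $\gamma'$ with $N<\gamma'<\gamma$; then on $\{|x|>R\}$ one has $K(x)\le C|x|^{-\gamma}\le CR^{-(\gamma-\gamma')}|x|^{-\gamma'}$, so by the continuous embedding applied with the weight $|x|^{-\gamma'}$ (which satisfies $(H1)$), $\int_{\{|x|>R\}}K|u_n|^r\le CR^{-(\gamma-\gamma')}\int_{B_1^c}|x|^{-\gamma'}|u_n|^r\le CR^{-(\gamma-\gamma')}M^r$, uniformly in $n$ and tending to $0$ as $R\to\infty$. Splitting $\|u_n-u_m\|_{L^r(B_1^c;K)}$ at radius $R$ — the $A_R$-part bounded by $(\sup_{A_R}K)^{1/r}\|u_n-u_m\|_{L^r(A_R)}\to 0$ along the subsequence, and the two tail contributions made small uniformly by taking $R$ large — shows the subsequence is Cauchy in $L^r(B_1^c;K)$, hence convergent, with limit equal to the $L^1_{\mathrm{loc}}$-limit. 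As $r$ was arbitrary, the embedding is compact for all $r\in[1,\infty)$.

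\emph{Main obstacle.} The only genuinely delicate point is the continuous embedding for $r>N$: one must extract from Theorem~\ref{inequality} the clean bound $\|u\|_{L^r(B_1^c;K)}\le C\|u\|$, and what makes this succeed is the observation that both auxiliary weights produced by that inequality, namely $|x|^{-(N+\theta)}=|x|^{-\gamma}$ and $|x|^{\delta}$ with $-\delta=N+\theta(1-a)$, carry exponent strictly larger than $N$, so the weighted $L^N$-embedding of \cite{MR3347486} controls the entire right-hand side. Once this is in hand the remainder is routine — the low-exponent range by Hölder, and compactness by the standard local-Rellich-plus-uniform-tail scheme — the only minor technicality being the customary identification of limits when passing from $C_c^\infty(B_1^c)$ to the completion $\mathcal{D}^{1,N}_0(B_1^c)$.
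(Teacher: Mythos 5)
Your proposal is correct, and its first half (the continuous embedding) follows the paper's route exactly: apply Theorem~\ref{inequality} with $\theta=\gamma-N$, observe that both right-hand-side weights $|x|^{-(N+\theta)}$ and $|x|^{\delta}$ decay faster than $|x|^{-N}$ and hence lie in $L^1((1,\infty);(\rho\log\rho)^{N-1})$ so that Lemma~\ref{Compact Embedding:Anoop} controls them by $C\|\nabla u\|_N$, then conclude by density for $r>N$ and by H\"older (using $\int_{B_1^c}K<\infty$) for $1\le r\le N$. One cosmetic point: in Theorem~\ref{inequality} the parameter $a$ is tied to $r$ through $r=N/(1-a)$, so you should take $a=1-N/r$ rather than ``any $a\in(0,1)$''; this changes nothing since $-\delta=N+\theta(1-a)>N$ for every such $a$. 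Where you genuinely diverge is the compactness proof. The paper applies the interpolation inequality directly to the differences $u_{m_k}-u$ of a weakly convergent subsequence: the gradient factor stays bounded while the two weighted $L^N$ norms of $u_{m_k}-u$ tend to zero by the \emph{compactness} of the embedding in Lemma~\ref{Compact Embedding:Anoop}, which kills the whole right-hand side. You instead run the classical local-Rellich-plus-uniform-tail scheme: boundedness in $W^{1,N}$ of each annulus $A_R$ (extracted from the weighted $L^N$ bound) gives local precompactness in $L^r(A_R)$, and the tail $\int_{|x|>R}K|u_n|^r$ is made small uniformly in $n$ by sacrificing a bit of decay ($\gamma'<\gamma$) and invoking the continuous embedding already proved. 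Both arguments are sound; the paper's is shorter because it leans on the compactness statement of \cite{MR3347486} a second time, while yours is more self-contained (only continuity of the weighted embeddings plus Rellich--Kondrachov), treats all $r\in[1,\infty)$ in one sweep rather than by the paper's ``similar arguments'' remark for $r\le N$, and makes explicit the tightness mechanism that actually drives compactness on an unbounded domain.
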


Theorem \ref{embedding} can be used to study various elliptic PDE's on the exterior of a ball. In particular, we consider the superlinear semipositone problem
\begin{equation} \label{Problem 1}
\begin{cases}
- \Delta u &= \lambda K(x) f(u) \hspace{.1cm} \mbox { in } B_1 ^c, \\
u(x)&=0 \hspace{1.6cm} \mbox { on }  \partial B_1, \\
\end{cases}
\end{equation}
where $B_1 ^c = \{ x\in \mathbb{R}^2 : |x|>1 \}$, $\lambda$ is a positive parameter, and $K: B_1 ^c \rightarrow \mathbb{R}^{+}$, $f:[0,\infty) \rightarrow \mathbb{R}$ belong to a class of continuous functions with $f(0) <0$ and $\lim_{s \rightarrow \infty} \frac{f(s)}{s}=\infty$. 

Problems of the form \eqref{Problem 1} with $f(0)<0$ are known as semipositone problems in the literature. Establishing the existence of positive solutions to semipositone problems is known to be both challenging and interesting, since the maximum principle cannot be applied directly to guarantee the positivity of solutions. We refer the reader to \cite{MR2328697}, \cite{MR2636416}, where the existence of a positive solution for small values of the parameter $\lambda$ was established for superlinear semipositone problems on bounded domains. In addition, a superlinear semipositone problem on $\mathbb{R}^N$ was studied in \cite{MR1219715}. 

Recently, there have been several studies on the existence of positive solutions to superlinear semipositone problems on exterior domains in $\mathbb{R}^N, N > 2$ (see \cite{MR3801828}, \cite{MR3415737}, \cite{MR4500097}, \cite{MR3782023}). In \cite{MR3415737}, \cite{MR3782023}, the authors established the existence of a radial solution to \eqref{Problem 1} under the assumption that the weight function $K$ is radial. A superlinear semipositone system was studied in \cite{MR3801828}. Very recently, in  \cite{MR4500097}, the authors extended the studies in \cite{MR3415737}, \cite{MR3782023} by allowing the weight function to be non radial and proved the existence of a positive solution to \eqref{Problem 1}. We note that these existence results, including those for radial solutions, were proved on exterior domains in $\mathbb{R}^N, N > 2$. In this paper, we prove the existence of a solution to \eqref{Problem 1} on the exterior of a ball in $\mathbb{R}^2$, using the embeddings established in Theorem \ref{Theorem:Compact embedding} and the Mountain Pass Lemma. The positivity of the solution is then proved by analyzing the properties of the solution. 

It is well known that solutions to problems of the form \eqref{Problem 1}  on exterior domains in $\mathbb{R}^2$ have different asymptotic behavior compared to solutions of similar problems on exterior domains in $\mathbb{R}^N, N>2$. Specifically, when $f\geq 0$, it was proved in \cite{MR1849197} that positive solutions to \eqref{Problem 1} on the exterior of a ball in $\mathbb{R}^2$ are bounded away from zero, whereas when $N >2,$ \eqref{Problem 1} has solutions that decay to zero at infinity. Even in the case where $f(0)<0,$ the solutions obtained for \eqref{Problem 1} in \cite{MR3801828}, \cite{MR3415737}, \cite{MR4500097}, \cite{MR3782023} decay to zero at infinity. We will discuss the boundedness and regularity of solutions to \eqref{Problem 1} on the exterior of a ball in $\mathbb{R}^2$. In order to establish the positivity of the solution, we also prove that the solutions to  \eqref{Problem 1} has a Green's function representation.

 We now state our assumptions on $K, f$. Both $K$ and $f$ assumed to be H\"older continuous with $K$ satisfying $(H_1)$. Define $f(t) = f(0)$ for $t < 0$, and rewrite $f$ as, $f(t) = g(t)+f(0)$ so that $g(t) = 0 $ for all $t \leq 0$. Additionally, assume:
\begin{enumerate}
\item[(H2)] $g(t) \geq 0$ for all $t \geq 0$.
\item[(H3)]  There exist $A, B >0 $, and $s >1$ such that $At ^s \leq g(t) \leq B t^s $ for all $t \geq 0$.
\item[(H4)] For $\mu = s+1$, $\mu G(t) \leq t g(t)$ for all $t \geq 0$, where $G(t) = \int_0 ^t g(\xi) d \xi .$ 
\end{enumerate}
We give the definition of a weak solution to \eqref{Problem 1} below.
\begin{definition}
We say $u \in \D_0^{1,2}(B_1^c)$ is a (weak) solution of the problem (\ref{Problem 1}) if $u$ satisfies 
\begin{equation} \label{weak solution}
\int_{B_1 ^c} \nabla u \cdot \nabla v dx = \lambda \int_{B_1^c} K(x) f(u) v dx, \mbox{ for all } v \in \D^{1,2}_0(B_1^c).
\end{equation}
\end{definition}
We have the following existence result.
\begin{theorem}
\label{existence}
Let $K: B_1 ^c \rightarrow \mathbb{R}^{+}$, $f:[0,\infty) \rightarrow \mathbb{R}$ be H\"older continuous functions. Assume that $K$ satisfies $(H1)$ and $f$ satisfies $(H2)$-$(H4)$. Then, there exist a $\overbar{\lambda} > 0$ and $0<\alpha<1$ such that for $\lambda \leq \overbar{\lambda}$, (\ref{Problem 1}) has a  solution $u \in C^2(B_1^c) \cap C^{1, \alpha}_{loc}(\overbar{B_1^c})$.
\end{theorem}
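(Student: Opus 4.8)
The plan is to realise $u$ as a critical point of an energy functional on $\mathcal{D}^{1,2}_0(B_1^c)$ via the Mountain Pass Lemma, after a shift that absorbs the semipositone term $f(0)<0$. Since $K\in L^q(B_1^c)$ for every $q\in[1,\infty)$ by $(H1)$, the embedding $\mathcal{D}^{1,2}_0(B_1^c)\hookrightarrow L^1(B_1^c;K)$ of Theorem~\ref{embedding} gives $\bigl|\int_{B_1^c}Kf(0)v\bigr|\le |f(0)|\,\|v\|_{L^1(B_1^c;K)}\le C\|v\|$, so $Kf(0)$ defines an element of $\bigl(\mathcal{D}^{1,2}_0(B_1^c)\bigr)^{*}$; let $w=w_\lambda\in\mathcal{D}^{1,2}_0(B_1^c)$ be the unique weak solution of $-\Delta w=\lambda K(x)f(0)$ in $B_1^c$ with $w=0$ on $\partial B_1$. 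Testing with $w^{+}\in\mathcal{D}^{1,2}_0(B_1^c)$ gives $w\le 0$, and since $Kf(0)\not\equiv 0$ the strong maximum principle yields $w<0$ in $B_1^c$; interior and boundary elliptic estimates (the boundary $\partial B_1$ is smooth and $Kf(0)\in L^q_{loc}(\overbar{B_1^c})$) give $w\in C^{1,\alpha}_{loc}(\overbar{B_1^c})$, so $w$ is bounded on compact subsets of $\overbar{B_1^c}$. Writing $u=v+w$, problem \eqref{Problem 1} becomes $-\Delta v=\lambda K(x)\,g(v+w)$, with associated functional
\[
J_\lambda(v)=\tfrac12\int_{B_1^c}|\nabla v|^2\,dx-\lambda\int_{B_1^c}K(x)\,G\bigl(v(x)+w(x)\bigr)\,dx,\qquad v\in\mathcal{D}^{1,2}_0(B_1^c).
\]
Because $g$, and hence $G$, vanishes on $(-\infty,0]$ and $w<0$, the integrand $G(v+w)$ is supported in $\{v>|w|\}\subset\{v>0\}$; together with $(H3)$ and the embedding into $L^{s+1}(B_1^c;K)$ this makes $J_\lambda$ well defined and $C^1$, with $\langle J_\lambda'(v),\varphi\rangle=\int_{B_1^c}\nabla v\cdot\nabla\varphi-\lambda\int_{B_1^c}K g(v+w)\varphi$, so any critical point $v$ produces a weak solution $u=v+w$ of \eqref{Problem 1} in the sense of the Definition.

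For the geometry of $J_\lambda$: on $\{v+w>0\}$ one has $0<v+w<v$, hence $G(v+w)\le \tfrac{B}{s+1}(v^{+})^{s+1}$ by $(H3)$, so $\int_{B_1^c}KG(v+w)\le C\|v\|^{s+1}$ by Theorem~\ref{embedding}; since $s+1>2$, $J_\lambda(v)\ge\tfrac12\|v\|^2-\lambda C\|v\|^{s+1}$, which is $\ge\beta>0$ on a suitable sphere $\|v\|=\rho$ once $\lambda$ is small (this is where $\overbar\lambda$ enters). For fixed $\varphi\in C_c^\infty(B_1^c)$ with $\varphi\ge 0$, $\varphi\not\equiv 0$, the lower bound $g(t)\ge At^s$ and the local boundedness of $w$ on $\operatorname{supp}\varphi$ give $J_\lambda(t\varphi)\le\tfrac{t^2}{2}\|\varphi\|^2-c_\varphi\lambda\,t^{s+1}\to-\infty$, so some $e=t_0\varphi$ lies outside the sphere with $J_\lambda(e)<0$. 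For the Palais--Smale condition, the pointwise inequality
\[
v\,g(v+w)=(v+w)g(v+w)-w\,g(v+w)\ \ge\ \mu\,G(v+w)+|w|g(v+w)\ \ge\ \mu\,G(v+w),
\]
valid since $g\ge 0$, $w<0$, and $(H4)$ holds with $\mu=s+1>2$, yields $J_\lambda(v_n)-\tfrac1\mu\langle J_\lambda'(v_n),v_n\rangle\ge\bigl(\tfrac12-\tfrac1\mu\bigr)\|v_n\|^2$, so every Palais--Smale sequence is bounded. Extracting $v_n\rightharpoonup v$, the compact embedding $\mathcal{D}^{1,2}_0(B_1^c)\hookrightarrow L^{s+1}(B_1^c;K)$, the bound $g(t)\le Bt^s$, and Hölder's inequality in the measure $K\,dx$ give $\int_{B_1^c}Kg(v_n+w)(v_n-v)\to 0$; combined with $\langle J_\lambda'(v_n),v_n-v\rangle\to 0$ and $v_n\rightharpoonup v$ in the Hilbert space $\mathcal{D}^{1,2}_0(B_1^c)$, this forces $\|v_n\|\to\|v\|$, hence $v_n\to v$ strongly. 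The Mountain Pass Lemma then produces a critical point $v$ of $J_\lambda$, and $u=v+w$ is a weak solution of \eqref{Problem 1}.

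It remains to bootstrap the regularity of $u=v+w$. In the interior of $B_1^c$, the Kelvin transform identification of $\mathcal{D}^{1,2}_0(B_1^c)$ with $H^1_0(B_1)$ (Theorem~2.6 of \cite{MR3925556}) together with the weighted embeddings put $u\in L^q_{loc}(B_1^c)$ for every $q<\infty$, so $Kf(u)\in L^q_{loc}(B_1^c)$ by $(H3)$; Calder\'on--Zygmund estimates give $u\in W^{2,q}_{loc}(B_1^c)\subset C^{1,\alpha}_{loc}(B_1^c)$, after which $Kf(u)$ is locally H\"older and Schauder theory gives $u\in C^{2,\alpha}_{loc}(B_1^c)\subset C^2(B_1^c)$; the same argument near the smooth boundary, using $u=0$ on $\partial B_1$, gives $u\in C^{1,\alpha}_{loc}(\overbar{B_1^c})$ for some $0<\alpha<1$ (the sharper boundedness and regularity statements are treated separately). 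I expect the main difficulties to be, first, that the sign $f(0)<0$ destroys the standard mountain-pass geometry at the origin — which is exactly what the shift by $w_\lambda$ repairs, and why solvability of the linear exterior problem and the $r=1$ case of Theorem~\ref{embedding} are needed; and second, that one is forced to work in $\mathcal{D}^{1,2}_0(B_1^c)$, whose elements need not belong to any unweighted $L^q(B_1^c)$, so that every continuity and compactness argument must be routed through the weighted embeddings of Theorem~\ref{embedding} rather than through the classical Sobolev embedding.
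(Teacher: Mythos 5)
Your argument is correct, but it takes a genuinely different route from the paper's. The paper works directly with the unshifted functional $J(u)=\tfrac12\|\nabla u\|_2^2-\lambda\int_{B_1^c}K(x)F(u)$ and confronts the semipositone term head-on: in the Palais--Smale step it splits $u_m=u_m^+-u_m^-$, tests $J'(u_m)$ against $u_m^-$ (where $f(u_m)\equiv f(0)$) to bound the negative parts, and only then applies $(H4)$ to $u_m^+$; in the geometry step the linear term $\lambda\int K f(0)u$ is beaten by taking $\lambda$ small on the unit sphere. You instead translate by the solution $w_\lambda\le 0$ of $-\Delta w=\lambda Kf(0)$ and run the textbook superlinear mountain pass for $v=u-w_\lambda$ --- the classical semipositone device. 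Both arguments rest on the same two inputs, namely the $r=1$ and $r=s+1$ cases of the weighted embedding of Theorem \ref{embedding} (the former to make $Kf(0)$ a bounded functional, respectively to control $\int Kf(0)u_m^-$; the latter for the superlinear term), and both finish with the same elliptic bootstrap. What your shift buys is a cleaner functional: $G(v+w_\lambda)$ vanishes wherever $v+w_\lambda\le 0$, so the inequality $v\,g(v+w_\lambda)\ge\mu G(v+w_\lambda)$ holds pointwise with no $\pm$ decomposition, and condition (b) of the Mountain Pass Lemma holds without shrinking $\lambda$ (only by choosing the radius $\rho$ small). What the paper's direct approach buys is that the mountain-pass level of its specific functional $J$ is exactly the quantity estimated in Lemma \ref{lem7} ($K_1\lambda^{-1/(s-1)}\le\|\nabla u\|_2\le K_2\lambda^{-1/(s-1)}$), which drives the positivity proof of Theorem \ref{positivity}; with your shifted functional those estimates would have to be rederived for $J_\lambda$ (harmless, since $\|\nabla w_\lambda\|_2=O(\lambda)$ is of lower order than $\lambda^{-1/(s-1)}$, but an extra step). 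One point worth making explicit in your write-up: the local boundedness of $w_\lambda$ on $\operatorname{supp}\varphi$ used for condition (c) follows from interior Schauder estimates because $K$ is H\"older continuous, or alternatively from Theorem \ref{regularity} applied with $f\equiv f(0)$.
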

Next, we state a general regularity result for the solutions to \eqref{Problem 1}, assuming only that $K, f$ are H\"older continuous, $K$ satisfies (H1) and $f$ satisfies the following assumption.
\begin{itemize}
\item [$(\tilde{H3})$]  There exist $C_1, C_2 >0 $, and $s >1$ such that $|f(t)| \leq C_1 +C_2 t^s $ for all $t \geq 0$.
\end{itemize}
Note that any $f$ satisfying $(H3)$ will satisfy $(\tilde{H3})$.
\begin{theorem}
\label{regularity}
Let $K: B_1 ^c \rightarrow \mathbb{R}^{+}$, $f:[0,\infty) \rightarrow \mathbb{R}$ be H\"older continuous functions. Assume that $K$ satisfies $(H1)$ and $f$ satisfies $(\tilde{H3})$. Suppose $u \in \D_0^{1,2}(B_1^c)$ is a solution to \eqref{Problem 1}, then $u$ is in $L^\infty(B_1^c)$.
\end{theorem}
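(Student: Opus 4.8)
The plan is to reduce the global $L^\infty$ estimate, including the behaviour at infinity, to interior elliptic regularity on a bounded domain with an integrable singular potential, by means of the Kelvin transform. Set $\hat u(y) = u(y/|y|^2)$. By the order-preserving isometric isomorphism between $\D_0^{1,2}(B_1^c)$ and $H_0^1(B_1)$ recalled in the introduction, $\hat u \in H_0^1(B_1)$; and since the Kelvin transform is conformal in dimension two, a direct computation shows
\begin{equation*}
-\Delta \hat u = \lambda\, \hat K(y)\, f(\hat u) \quad \text{in } B_1 \setminus \{0\}, \qquad \hat K(y) := |y|^{-4} K(y/|y|^2).
\end{equation*}
The image of $\partial B_1$ is again $\partial B_1$, while $|x| \to \infty$ corresponds to $y \to 0$, so $u \in L^\infty(B_1^c)$ is equivalent to $\hat u \in L^\infty(B_1)$.

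Next I would collect the integrability facts needed to run Calderón--Zygmund theory for $\hat u$ on $B_1$. By $(H1)$, $\hat K(y) \le C|y|^{\gamma-4}$ with $\gamma > 2$, so the only possible singularity of $\hat K$, at the origin, is strictly weaker than $|y|^{-2}$; hence $\hat K \in L^p(B_1)$ for some $p > 1$ (any $1 < p < 2/(4-\gamma)$ if $\gamma < 4$, any $p<\infty$ otherwise), and $\hat K$ is bounded and H\"older near $\partial B_1$. By the Trudinger--Moser embedding in dimension two, $\hat u \in L^q(B_1)$ for every $q < \infty$, so $(\tilde H3)$ gives $f(\hat u) \in L^q(B_1)$ for every $q < \infty$, and H\"older's inequality yields $\hat K f(\hat u) \in L^{p_0}(B_1)$ for some $p_0 > 1$. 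Since points have zero $W^{1,2}$-capacity in $\mathbb{R}^2$ and $\hat u \in H^1(B_1)$ with right-hand side in $L^{p_0}(B_1) \hookrightarrow H^{-1}(B_1)$, the weak formulation extends across the origin, i.e. the equation holds in all of $B_1$. Interior and up-to-the-boundary $L^{p_0}$-estimates (the boundary $\partial B_1$ being smooth and $\hat u = 0$ there) then give $\hat u \in W^{2,p_0}(B_1)$; as $p_0 > 1 = N/2$, the Sobolev embedding $W^{2,p_0}(B_1) \hookrightarrow L^\infty(B_1)$ applies, so $\hat u \in L^\infty(B_1)$ and therefore $u \in L^\infty(B_1^c)$.

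The crux is entirely contained in the second step and is a borderline phenomenon: in dimension two there is no embedding of $H^1$ into $L^\infty$, so the decay of $K$ must be used in an essential way. The precise point is that $(H1)$ with $\gamma > 2$ is exactly what pushes the integrability exponent $p_0$ of the transformed right-hand side strictly above the critical value $N/2 = 1$, which is what makes $W^{2,p_0}$ embed into $L^\infty$; the borderline case $\gamma = 2$ would just miss this. A secondary technical point is the removability of the origin as a singularity of $\hat u$, which follows from the vanishing $W^{1,2}$-capacity of a point in the plane together with the $L^{p_0}$ bound on $\hat K f(\hat u)$. As an alternative one could avoid the Kelvin transform and perform a Brezis--Kato / Moser iteration directly on $B_1^c$ using the weighted compact embeddings of Theorem \ref{embedding}, but routing through the bounded domain $B_1$ makes the role of hypothesis $(H1)$ most transparent.
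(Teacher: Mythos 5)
Your proposal is correct and follows essentially the same route as the paper: Kelvin transform to the ball problem, verification that the transformed right-hand side lies in $L^{p_0}(B_1)$ for some $p_0>1$ thanks to the decay $\gamma>2$ in $(H1)$, and standard elliptic regularity to conclude $\hat u\in L^\infty(B_1)$ and hence $u\in L^\infty(B_1^c)$. The only differences are presentational: the paper estimates the $L^\eta$ norm of the transformed right-hand side by changing variables back to $B_1^c$ and invoking the weighted embedding of Theorem \ref{embedding}, whereas you use the Sobolev embedding of $H_0^1(B_1)$ directly, and you make explicit the removable-singularity step at the origin that the paper leaves implicit.
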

We now discuss the positivity of the solution obtained in Theorem \ref{existence} under an additional assumption on the weight function. We assume the following. 
\begin{itemize}
\item[(H5)] There exist $C>0$ and $\gamma > 3$ such that $K(x) \leq \frac{C}{|x|^{\gamma}}$ for $|x| >1$.
\end{itemize}
Under this condition on $K$, we first establish a representation formula for solutions to \eqref{Problem 1}.  
\begin{theorem}
\label{Greens}
Let $K$, $f$ be locally Holder continuous functions, $f$ be as in Theorem \ref{regularity} and $K$ satisfies $(H5)$. Let $u \in \mathcal{D}^{1,2}_0(B^c_1)$ be a solution of \eqref{Problem 1}. Then $u$ has the following representation
\begin{equation}\label{Equation: Greens}
u(x)=\int_{B_1^c}\lambda G(x,y)K(y)f(u(y)) \text{ , } x\in B_1^c,
\end{equation}\\
where $G(x,y)=-\frac{1}{2\pi}\left (\log{|y-x|}-\log\left({|x||y-\frac{x}{|x|^2}|}\right ) \right )$, $x,y\in B_1^c$ is the Green's function.
\end{theorem}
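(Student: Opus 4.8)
The plan is to exhibit the right‑hand side of \eqref{Equation: Greens} as a function solving the same boundary value problem as $u$ in a classical/pointwise sense, and then to identify it with $u$ by a Liouville‑type uniqueness argument tailored to the exterior domain.

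First I would fix notation: set $h(x):=\lambda K(x)f(u(x))$. By Theorem~\ref{regularity} we have $u\in L^\infty(B_1^c)$, so, since $f$ is continuous and $K$ satisfies $(H5)$ (which in particular implies $(H1)$), $h$ is locally Hölder continuous with $|h(x)|\le C|x|^{-\gamma}$, $\gamma>3$; in particular $h\in L^1(B_1^c)\cap L^\infty(B_1^c)$ and, crucially for later, $\int_{B_1^c}(1+|y|)\,|h(y)|\,dy<\infty$. I would then study $v(x):=\int_{B_1^c}G(x,y)h(y)\,dy$, the candidate for $u$. Writing $x^{*}:=x/|x|^{2}$ one has
\[
G(x,y)=-\frac{1}{2\pi}\log|x-y|+\frac{1}{2\pi}\log|x|+\frac{1}{2\pi}\log|y-x^{*}|,
\]
whose last two terms form the regular part: it is harmonic in $x\in B_1^c$, being the logarithm of the modulus of a function holomorphic and nonvanishing on $B_1^c$ (since $x^{*}\in B_1$ while $y\in B_1^c$), while $G(x,y)=0$ for $|x|=1$ and $-\Delta_xG(\cdot,y)=\delta_y$ in $B_1^c$. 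Since $\log|x-y|$ is integrable in two dimensions, $G(x,\cdot)$ is bounded near infinity, and $G(x,y)\to0$ as $|y|\to1$, the decay of $h$ makes the integral absolutely convergent for every $x$. From here standard Newtonian‑potential estimates (using $h$ bounded and locally Hölder) would give $v\in C^2(B_1^c)$, continuous up to $\partial B_1$ with $v=0$ there, and $-\Delta v=h$ in $B_1^c$.

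The one genuinely delicate step is the behaviour of $v$ at infinity. Using $|x-y|=|x|\,\bigl|\,|x|^{-1}x-|x|^{-1}y\,\bigr|$ in the decomposition above, the term $\frac{\log|x|}{2\pi}\int_{B_1^c}h$ coming from $\log|x|$ is cancelled exactly by the matching term in $-\frac{1}{2\pi}\int\log|x-y|\,h$, leaving
\[
v(x)=\frac{1}{2\pi}\int_{B_1^c}\log|y-x^{*}|\,h(y)\,dy-\frac{1}{2\pi}\int_{B_1^c}\log\bigl|\,|x|^{-1}x-|x|^{-1}y\,\bigr|\,h(y)\,dy .
\]
I would then split each integral over $\{|y|\le|x|/2\}$, $\{|x|/2\le|y|\le2|x|\}$, $\{|y|\ge2|x|\}$, controlling $\log|y-x^{*}|$ by $C\log(2+|y|)$, and $\log\bigl|\,|x|^{-1}x-|x|^{-1}y\,\bigr|$ by $C|y|/|x|$ away from $|y|\sim|x|$ and by $\log(|x-y|/|x|)$ near it (integrable after rescaling). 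Invoking $\int_{B_1^c}(1+|y|)\,|h(y)|\,dy<\infty$ — this is exactly where $(H5)$ is used — one gets $v(x)\to c_\infty:=\frac{1}{2\pi}\int_{B_1^c}\log|y|\,h(y)\,dy$ as $|x|\to\infty$; in particular $v$ is bounded on $B_1^c$. The same splitting applied to $\nabla_xG$ would give $|\nabla v(x)|=O(|x|^{-2})$, so $v\in\D^{1,2}_0(B_1^c)$. I expect this infinity analysis to be the main obstacle, and it is what forces the decay required in $(H5)$; everything else is standard potential theory transplanted to the exterior domain.

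Finally I would set $w:=u-v$. Since $u$ solves \eqref{Problem 1} weakly and $v\in C^2(B_1^c)$ solves $-\Delta v=h$ pointwise, hence weakly (integrate by parts against test functions on bounded subsets of $B_1^c$), $w$ is weakly harmonic, hence harmonic and smooth by Weyl's lemma; it is bounded because $u\in L^\infty(B_1^c)$ and $v$ is bounded, and $w=0$ on $\partial B_1$ (classically for $v$, in the trace sense for $u$; with interior regularity this gives $w\in C^0(\overline{B_1^c})$ with $w|_{\partial B_1}=0$). A bounded harmonic function on the exterior of the unit disc in $\R^2$ vanishing on $\partial B_1$ must be identically zero: its Kelvin transform $\widehat w(x)=w(x/|x|^2)$ is bounded and harmonic on $B_1\setminus\{0\}$, so the origin is a removable singularity, $\widehat w$ extends harmonically to $B_1$ with zero boundary data, and $\widehat w\equiv0$ by the maximum principle. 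Hence $u\equiv v$, which is \eqref{Equation: Greens}. (An essentially equivalent, perhaps cleaner route uses the Kelvin transform from the start: $\widehat u=\mathcal{K}u\in H_0^1(B_1)$ solves $-\Delta\widehat u=\widehat h$ in $B_1$ with $\widehat h(x)=|x|^{-4}h(x/|x|^2)\in L^p(B_1)$ for some $p>1$, so $\widehat u$ is the Green potential of $\widehat h$ for the unit ball; using $|a^{*}-b^{*}|=|a-b|/(|a|\,|b|)$ and $|x|\,|y-x^{*}|=|y|\,|x-y^{*}|$ one checks that the ball Green's function pulls back exactly to $G$, and changing variables yields \eqref{Equation: Greens}.)
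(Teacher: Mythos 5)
Your proposal is correct, but your primary route differs genuinely from the paper's. The paper never constructs the potential on the exterior domain: it passes immediately to the Kelvin transform $\hat u=\mathcal{K}u$, which solves $-\Delta\hat u=\lambda\hat K(x)f(\hat u)|x|^{-4}$ in the unit ball, uses $(H5)$ and $(\tilde H3)$ to show this right-hand side lies in $L^\eta(B_1)$ for some $\eta>2$ (hence $\hat u\in C^{1,\alpha}(\overline{B_1})$), and then derives the representation by the classical Green's identity argument in the ball --- applying the divergence theorem on the annulus $r<|y|<1$ with the fundamental solution and the corrector $h^x$, estimating the boundary terms on $\partial B_r$, and letting $r\to0$ to handle the possible singularity of the transformed data at the origin --- before pulling back via the identity $G(x/|x|^2,y/|y|^2)=G(x,y)$. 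This is essentially the route you relegate to your final parenthesis. Your main argument instead builds $v(x)=\int G(x,y)h(y)\,dy$ directly on $B_1^c$ and identifies $u\equiv v$ by a Liouville-type uniqueness step. The trade-off is clear: the paper's compactification converts the behaviour at infinity into regularity at an interior point of the ball, where $L^\eta$-elliptic theory and the excised-ball limit do all the work, whereas your approach must confront infinity head-on (your cancellation of the $\log|x|\int h$ terms and the first-moment condition $\int(1+|y|)|h|<\infty$, which is exactly where $\gamma>3$ enters in both proofs) and additionally needs $u$ to be bounded and to vanish continuously on $\partial B_1$ so that the bounded harmonic difference $w=u-v$ can be killed by removable singularity plus the maximum principle; for that last step you should cite Theorem \ref{regularity}, whose proof gives $u\in C^\alpha(\overline{B_1^c})$ with zero boundary values, rather than appeal only to the trace sense. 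In compensation, your argument yields the asymptotic value $v(x)\to\frac{1}{2\pi}\int\log|y|\,h(y)\,dy$ and the gradient decay as by-products, and cleanly isolates where each hypothesis is used.
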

\begin{remark}
Since $G>0$, equation (\ref{Equation: Greens}) implies that the solutions to (\ref{Problem 1}) are nonnegative when $f \geq 0$. 
\end{remark} 
Finally, we state the positivity result. 
\begin{theorem}
\label{positivity}
Let $K: B_1 ^c \rightarrow \mathbb{R}^{+}$, $f:[0,\infty) \rightarrow \mathbb{R}$ be H\"older continuous functions. Assume that $K$ satisfies $(H5)$ and $f$ satisfies $(H2)$-$(H4)$, then the solution obtained in Theorem \ref{existence} is positive.
\end{theorem}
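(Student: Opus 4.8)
The plan is to combine the Green's representation of Theorem~\ref{Greens} with an a~priori size estimate for the mountain--pass solution; the size estimate is cleanest after transferring the problem to the bounded domain $B_1$ via the Kelvin isomorphism $\mathcal{K}$ of \cite{MR3925556}. Since $(H5)$ implies $(H1)$ and $f$ is as in Theorem~\ref{regularity}, Theorems~\ref{existence}, \ref{regularity} and \ref{Greens} apply to the solution $u$ of Theorem~\ref{existence}: it is bounded, lies in $C^2(B_1^c)\cap C^{1,\alpha}_{loc}(\overbar{B_1^c})$, and satisfies \eqref{Equation: Greens}. Writing $f=g+f(0)$ with $g\ge0$ (by $(H2)$) gives the splitting $u=\lambda z-\lambda|f(0)|w_0$, where
\[
z(x)=\int_{B_1^c}G(x,y)K(y)g(u(y))\,dy\ge0,\qquad w_0(x)=\int_{B_1^c}G(x,y)K(y)\,dy>0
\]
(using $G>0$, $K>0$, $g\ge0$), so that $-\Delta z=Kg(u)$ and $-\Delta w_0=K$ in $B_1^c$ with zero boundary data. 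Hence proving $u>0$ on $B_1^c$ is the same as proving $z>|f(0)|w_0$ there, a comparison whose delicate region is $\partial B_1$, where both sides vanish. Under $\mathcal{K}$ this becomes a Poisson problem on $B_1$: $\hat u=\lambda\hat z-\lambda|f(0)|\hat w_0$ with $\hat z,\hat w_0\ge0$, $-\Delta\hat z=\widehat K\,g(\hat u)$, $-\Delta\hat w_0=\widehat K$ in $B_1$ with zero Dirichlet data, where $\widehat K(x)=|x|^{-4}K(x/|x|^2)$ and $G_{B_1}=\mathcal{K}G$ is the Green's function of $B_1$. By $(H5)$ with $\gamma>3$, $\widehat K(x)\le C|x|^{\gamma-4}$, so $\widehat K\in L^p(B_1)$ for some $p>2$; therefore $\hat w_0\in W^{2,p}(B_1)\cap W^{1,p}_0(B_1)\hookrightarrow C^{1,\alpha}(\overbar{B_1})$ and $0\le\hat w_0(x)\le C_0\,\mathrm{dist}(x,\partial B_1)$ on $B_1$.

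Next I would show the mountain--pass solution is large. Testing \eqref{weak solution} with $v=u$, bounding $f(u)u\le B(u^+)^{s+1}+|f(0)||u|$ via $(H3)$, and using the continuous embeddings $\D^{1,2}_0(B_1^c)\hookrightarrow L^{s+1}(B_1^c;K)\cap L^1(B_1^c;K)$ of Theorem~\ref{embedding}, one gets $\|u\|\le C\lambda\|u\|^{s}+C\lambda$, hence $\|u\|\ge c\,\lambda^{-1/(s-1)}$ for small $\lambda$; feeding this back with $\int K(u^+)^{s+1}\le\|u^+\|_\infty^{s-1}\int K(u^+)^2$ gives $\|u\|_{L^\infty(B_1^c)}\ge c\,\lambda^{-1/(s-1)}$. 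The matching upper bound follows from $c_\lambda\le\max_{t\ge0}I(t\varphi)\le C\lambda^{-2/(s-1)}$ for a fixed $0\le\varphi\in C_c^\infty(B_1^c)$, together with $(H4)$ applied in the standard way to $c_\lambda=I(u)-\tfrac1\mu\langle I'(u),u\rangle$ (which yields $\|u\|\le C\lambda^{-1/(s-1)}$) and a subcritical Moser iteration — equivalently, rescaling $u=\lambda^{-1/(s-1)}v$, which solves a $\lambda$--independent limiting equation $-\Delta v=cKv^s$ modulo $o(1)$ terms — so that $M_\lambda:=\|u\|_{L^\infty(B_1^c)}=\|\hat u\|_{L^\infty(B_1)}\asymp\lambda^{-1/(s-1)}$.

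With this in hand, localisation and comparison finish the proof. From $-\Delta\hat u=\lambda\widehat K f(\hat u)$ and $(H3)$ one has $\|\lambda\widehat K f(\hat u)\|_{L^p(B_1)}\le C\lambda M_\lambda^{s}\le C'M_\lambda$ (since $\lambda M_\lambda^{s}\asymp\lambda^{-1/(s-1)}\asymp M_\lambda$), so global elliptic estimates give $\|\hat u\|_{W^{2,p}(B_1)}\le CM_\lambda$ and $\|\nabla\hat u\|_{L^\infty(B_1)}\le C_*M_\lambda$ with $C_*$ independent of $\lambda$. Because $\hat u$ vanishes on $\partial B_1$ and attains $M_\lambda$, its maximum point $\hat x_\lambda$ lies in the fixed compact set $\{\mathrm{dist}(\cdot,\partial B_1)\ge1/C_*\}$ and $\hat u\ge M_\lambda/2$ on a ball $B_{r_0}(\hat x_\lambda)\subset B_1$ of a fixed radius $r_0$. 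Using $g(\hat u)\ge A(\hat u^+)^s$ from $(H3)$ and restricting the Green's integral to $B_{r_0}(\hat x_\lambda)$,
\[
\hat z(x)\ \ge\ \int_{B_{r_0}(\hat x_\lambda)} G_{B_1}(x,y)\,\widehat K(y)\,g(\hat u(y))\,dy\ \ge\ c\,M_\lambda^{s}\,\mathrm{dist}(x,\partial B_1),\qquad x\in B_1,
\]
the last inequality combining the Hopf--type bound $G_{B_1}(x,y)\ge c\,\mathrm{dist}(x,\partial B_1)$, valid for $y$ in a fixed compact subset of $B_1$, with $\inf_{\xi}\int_{B_{r_0}(\xi)}\widehat K>0$. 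Since $M_\lambda^{s}\asymp\lambda^{-s/(s-1)}\to\infty$ while $|f(0)|\hat w_0(x)\le|f(0)|C_0\,\mathrm{dist}(x,\partial B_1)$, we conclude $\hat z>|f(0)|\hat w_0$ on $B_1$ for $\lambda$ small, i.e. $\hat u>0$ in $B_1$ and so $u=\mathcal{K}^{-1}\hat u>0$ in $B_1^c$; combined with Theorem~\ref{existence} this is the asserted positive solution.

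The main obstacle is the localisation step: one must guarantee that the bulk of $u$ sits in a \emph{fixed} region bounded away from $\partial B_1$, so that $\lambda z$ dominates $\lambda|f(0)|w_0$ \emph{uniformly up to the boundary} — where the linear rates of vanishing of the two sides must be matched, which is legitimate only because $z\not\equiv0$ — and, in $B_1^c$ language, so that no mass escapes to infinity. Passing to $B_1$ via the Kelvin isomorphism turns ``no escape to infinity'' into the compactness of a ball and makes the global $W^{2,p}$ and Green's--function estimates used above routine; at a more basic level, it is the compactness of the embedding in Theorem~\ref{embedding} that makes the variational solution of Theorem~\ref{existence} available and well behaved in the first place.
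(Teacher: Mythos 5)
Your proposal is correct in substance but follows a genuinely different route from the paper's in its second half. Both arguments share the same first half: the two\nobreakdash-sided energy estimate $\|\nabla u\|_2\asymp\lambda^{-1/(s-1)}$ obtained from testing with $u$, the mountain--pass level bound $c_\lambda\le\max_{t}J(t\varphi)\le C\lambda^{-2/(s-1)}$, and $(H4)$ (this is the paper's Lemma~\ref{lem7}); both then locate a fixed region, independent of $\lambda$, on which the solution is of size $\lambda^{-1/(s-1)}$, and feed it into a Hopf--type lower bound for the Green's function. The divergence is in how that region is found and in how positivity far from $\partial B_1$ is handled. The paper finds the region measure--theoretically (the superlevel set $S=\{u\ge\epsilon\lambda^{-1/(s-1)}\}$ has $|S|\ge\tilde C$), uses the Green's function only to prove $u(\xi)\ge C\lambda^{-1/(s-1)}d(\xi,\partial B_1)$ in a collar of $\partial B_1$, and then treats $|x|\ge 1+\tilde\epsilon$ by a separate, genuinely two--dimensional comparison (Lemma~\ref{lower estimate}: $u+\lambda w$ is superharmonic, and nonnegative superharmonic functions on planar exterior domains are bounded below by their minimum on the inner boundary). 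You instead find the region as a fixed ball around the maximum point of $\hat u$, via the $L^\infty$ lower bound $M_\lambda\ge c\lambda^{-1/(s-1)}$ and the uniform gradient bound $\|\nabla\hat u\|_\infty\le C_*M_\lambda$, and you run the single global comparison $\lambda\hat z>\lambda|f(0)|\hat w_0$ in the Kelvin--transformed ball, where the far field of $B_1^c$ becomes a neighbourhood of the origin on which $\mathrm{dist}(\cdot,\partial B_1)$ is bounded below; this removes the need for the exterior--domain Lemma~\ref{lower estimate} altogether. Your localisation is arguably more robust: the paper's step ``choose $R$ with $|S\cap B(0,R)|>C_1$'' tacitly assumes the superlevel set does not drift to infinity as $\lambda\to 0$, whereas your maximum point automatically lies in a fixed compact subset of $B_1$. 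Two points to tighten: the upper bound $M_\lambda\le C\lambda^{-1/(s-1)}$ needs no Moser iteration or rescaling, since it follows from the $W^{2,\eta}$ estimate already used in the proof of Theorem~\ref{regularity} together with $\|\nabla u\|_2\le K_2\lambda^{-1/(s-1)}$ (one has $\|\Delta\hat u\|_{L^\eta(B_1)}\le C\lambda\bigl(1+\|\nabla u\|_2^{s}\bigr)\le C\lambda^{-1/(s-1)}$ by Theorem~\ref{embedding}, and $W^{2,\eta}(B_1)\hookrightarrow L^\infty$ for $\eta>1$ in dimension two); and since $u$ may be negative somewhere, the maximum--point argument should be phrased for $\sup\hat u^{+}$ rather than $\|\hat u\|_{\infty}$, which is harmless because the representation formula gives $u\ge-\lambda|f(0)|\,\|w_0\|_\infty$, so the negative part is only $O(\lambda)$.
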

We also state a nonexistence result for large values of the parameter $\lambda$. The proof of this is exactly as in \cite{MR4500097} (see Theorem 1.4) and is therefore omitted. For this result, we only assume the following conditions on $f$.
\begin{enumerate}
\item[(H6)] There exists $\beta >0$ such that $f(\beta) = 0$, $f(x)<0$ for all $x<\beta$ and $f(x) >0$ for all $x>\beta$.
\item[(H7)] $\lim_{s \rightarrow \infty}\frac{f(s)}{s} = \infty$.
\end{enumerate}
\begin{theorem}
Let $K:B_1^c \rightarrow \mathbb{R}^+$ be a continuous function, $f \in C^1[0,\infty)$ satisfies $(H6), (H7)$. Then, there exists $\tilde{\lambda} > 0$ such that for $\lambda \geq \tilde{\lambda}$, \eqref{Problem 1} has no positive solution in $C^2(B_1^c)$.
\end{theorem}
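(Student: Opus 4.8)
The plan is to argue by contradiction: assuming \eqref{Problem 1} has a positive solution $u\in C^2(B_1^c)$ for arbitrarily large $\lambda$, I would derive an upper bound on $\lambda$ by combining a first--eigenfunction comparison on a \emph{fixed bounded} annulus with an a priori lower bound forcing positive solutions to blow up as $\lambda\to\infty$. This is the scheme of \cite[Theorem~1.4]{MR4500097}, now over $B_1^c\subset\mathbb{R}^2$. Note that only continuity (and positivity) of $K$ is available here, so the argument is localized to a bounded subdomain rather than using the weighted eigenfunction of $-\Delta$ on all of $B_1^c$ (which would require a decay hypothesis such as $(H1)$).

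Concretely, I would fix an annulus $A=\{x: a<|x|<b\}$ with $1<a<b<\infty$, so that $\overline A\Subset B_1^c$ and hence $0<k_0:=\min_{\overline A}K\le\max_{\overline A}K=:k_1<\infty$. Let $(\nu_1,\psi_1)$ be the first Dirichlet eigenpair of $-\Delta$ on $A$, with $\psi_1>0$ in $A$, $\psi_1=0$ on $\partial A$, and $\partial_\nu\psi_1\le 0$ on $\partial A$ by Hopf. Multiplying $-\Delta u=\lambda K f(u)$ by $\psi_1$, integrating over $A$, using Green's identity and $u>0=\psi_1$ on $\partial A$ gives
\begin{equation*}
\lambda\int_A K(x)f(u)\,\psi_1\,dx=\nu_1\int_A u\,\psi_1\,dx+\int_{\partial A}u\,\partial_\nu\psi_1\,dS\le \nu_1\int_A u\,\psi_1\,dx.
\end{equation*}
From $(H6)$ one has $f(0)<0$, and from $(H7)$, for every $M>0$ there is $C_M>0$ with $f(t)\ge Mt-C_M$ for all $t\ge 0$ (take $-C_M=\inf_{t\ge 0}(f(t)-Mt)$, which is attained and negative by superlinearity). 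Inserting this bound and $k_0\le K\le k_1$ on $\overline A$ yields
\begin{equation*}
(\lambda M k_0-\nu_1)\int_A u\,\psi_1\,dx\le \lambda C_M k_1\int_A\psi_1\,dx.
\end{equation*}

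The crux — and the step I expect to be the main obstacle — is the a priori lower bound: for every $R>0$ there is $\Lambda(R)>0$ such that every positive solution of \eqref{Problem 1} with $\lambda\ge\Lambda(R)$ satisfies $\int_A u\,\psi_1\ge R$; equivalently, positive solutions blow up on $A$ as $\lambda\to\infty$. I would obtain this by a comparison argument: using $(H7)$ one constructs, on a slightly smaller annulus $A'\Subset A$, a function $\underline u_\lambda$ with $-\Delta\underline u_\lambda\le\lambda K f(\underline u_\lambda)$ in $A'$, $\underline u_\lambda\le u$ on $\partial A'$ (e.g. $\underline u_\lambda\le 0$ there), and $\min_{A''}\underline u_\lambda\to\infty$ for a fixed $A''\Subset A'$; since $f$ is not monotone, one applies the comparison principle after adding a large multiple of $\lambda K u$ to both sides (legitimate on the bounded range involved), concluding $u\ge\underline u_\lambda$ on $A'$. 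The delicate point is precisely the semipositone sign condition $(H6)$: near $\partial A'$ the subsolution must pass through small positive values where $f<0$, so $-\Delta\underline u_\lambda$ has to be made strongly negative there, which forces a careful $\lambda$-dependent choice of profile. The unboundedness of $B_1^c$ does not enter, since the construction lives in the fixed annulus; it only enters through the preliminary remark — via the strong maximum principle for $B_1^c\subset\mathbb{R}^2$ (a bounded subharmonic function vanishing on $\partial B_1$ is $\le 0$) — that every positive solution obeys $\sup_{B_1^c}u>\beta$, which is where one starts locating where $u$ is large.

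With the lower bound available the proof closes quickly. Fix any $M>0$, put $R_0:=C_M k_1\int_A\psi_1/(M k_0)$, and choose $\lambda$ so large that $\lambda M k_0>2\nu_1$ and that every positive solution has $\int_A u\,\psi_1>R_0+1$. Since $\lambda C_M k_1\int_A\psi_1=\lambda M k_0 R_0$ and the coefficient $\lambda M k_0-\nu_1$ is positive, the last displayed inequality forces $(\lambda M k_0-\nu_1)(R_0+1)<\lambda M k_0 R_0$, hence $\lambda M k_0<\nu_1(R_0+1)$, i.e. $\lambda<\nu_1(R_0+1)/(M k_0)$ — contradicting that $\lambda$ was large. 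Taking $\tilde\lambda$ to be the maximum of this bound and the two thresholds above shows that \eqref{Problem 1} has no positive solution in $C^2(B_1^c)$ for $\lambda\ge\tilde\lambda$.
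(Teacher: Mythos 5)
The paper gives no proof of this theorem: it explicitly defers to \cite[Theorem~1.4]{MR4500097}, so the only basis for comparison is the scheme of that reference, which is indeed the one you are following (test against the first Dirichlet eigenfunction of a fixed annulus $A\Subset B_1^c$, use superlinearity to minorize $f$ linearly, and contradict the resulting inequality for large $\lambda$). Your first half is correct: the Green's identity with $\partial_\nu\psi_1\le 0$ and $u>0$ on $\partial A$, the minorant $f(t)\ge Mt-C_M$, the inequality $(\lambda Mk_0-\nu_1)\int_A u\psi_1\le \lambda C_Mk_1\int_A\psi_1$, and the closing arithmetic are all sound, \emph{provided} the a priori lower bound $\int_A u\,\psi_1\ge R_0+1$ for all positive solutions with $\lambda$ large is available. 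Without it, the eigenfunction inequality only yields an \emph{upper} bound on $\int_A u\psi_1$ that stabilizes as $\lambda\to\infty$, which is not a contradiction; and since $C_M/M$ typically grows with $M$ (e.g. $f(t)=t^2-1$ gives $C_M\sim M^2/4$), you cannot evade the lower bound by sending $M\to\infty$ either. So the entire weight of the proof rests on the step you yourself flag as the ``crux,'' and that step is not established.

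The sketch you give for it does not work as stated. The assertion that $u\ge\underline u_\lambda$ follows from $-\Delta\underline u_\lambda\le\lambda Kf(\underline u_\lambda)$, $\underline u_\lambda\le u$ on $\partial A'$, ``after adding a large multiple of $\lambda Ku$ to both sides'' is not a valid comparison principle when $f$ is increasing/superlinear: writing $(-\Delta+\lambda KL)(u-\underline u_\lambda)\ge \lambda K\bigl[(f(u)+Lu)-(f(\underline u_\lambda)+L\underline u_\lambda)\bigr]$ requires the sign of $u-\underline u_\lambda$ to conclude the right-hand side is nonnegative, which is circular (a counterexample to the naive comparison is $\underline u=c\psi_1$ for the linear eigenvalue problem, a subsolution for every $c$). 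What is actually needed is Serrin's sweeping principle along a continuously ordered family of \emph{strict} subsolutions joining something below $u$ to the large profile, each member of which must satisfy the subsolution inequality including on the transition region where it takes values in $(0,\beta)$ and $f<0$ — this forced subharmonicity near $\partial A'$ against superharmonicity in the core is precisely the semipositone difficulty, and it is the content of the omitted construction, not a routine remark. There is also a secondary soft spot: even granting that $\sup_{B_1^c}u>\beta$ (which, as you note, needs the two-dimensional Phragm\'en--Lindel\"of argument and implicitly a boundedness assumption on $u$), nothing in your outline transports that largeness to the \emph{fixed} annulus $A$ on which $\nu_1,k_0,k_1$ were computed; the set where $u$ exceeds $\beta$ could a priori lie far from $A$. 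Until the lower bound on $\int_A u\psi_1$ is actually proved, the proposal is an outline of the right strategy rather than a proof.
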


The rest of the paper is organized as follows.  In Section 2, we provide the proof of the continuous and compact embeddings. The existence and regularity results are established in Section 3 and in Section 4, we prove the representation formula for solutions to \eqref{Problem 1} and the positivity result. Throughout the paper, we denote generic positive constants by $C$, which may represent different constants within the same proof.

\section{Continuous and Compact Embeddings of $\D_0^{1,N}(B_1^c)$}
We first establish an inequality for functions in $C_c^\infty(B_1^c)$, the space of all compactly supported smooth functions on $B_1^c$. We use arguments from  \cite{MR768824}, where a similar inequality is proved. We are able to remove some conditions on the parameters given in \cite{MR768824} as our domain is the exterior of the unit ball. A special case of the following Lemma will be used to prove our embeddings. 
\begin{lemma}\label{Lemma:Embedding}
Let $p , q , r, a , \alpha,\beta,\sigma \in \R$ with $p,q \geq 1,$ $r>0$, $0<a<1$ and $\alpha-\sigma\geq0$. Let $$\gamma=a\sigma+(1-a)\beta \text{ , and assume that } \frac{1}{r}+\frac{\gamma}{N}=a \left( \frac{1}{p}+\frac{\alpha-1}{N} \right)+( 1-a ) \left( \frac{1}{q}+\frac{\beta}{N} \right).$$ For $\rho \geq1$, set $R_{\rho}$=$\{x:\rho<|x|\leq 2\rho\}$. If $u\in C_c^\infty(B_1^c)$ and $\delta=p\gamma+\frac{pN}{r}-N$, then 
\begin{equation}
\label{Lemmainequality}
\int_{R\rho}|x|^{\gamma r}|u|^r\leq C \left( \int_{R_\rho}|x|^{\alpha p}|\nabla u|^p \right)^\frac{ar}{p} \left( \int_{R\rho}|x|^{\beta q}|u|^q \right)^\frac{(1-a)r}{q}+C \left( \int_{R\rho}|x|^\delta |u|^p \right)^\frac{r}{p},
\end{equation}
where $C>0$ is a constant independent of $\rho$.
\end{lemma}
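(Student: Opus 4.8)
The plan is to reduce \eqref{Lemmainequality} to a weight-free interpolation inequality on the \emph{fixed} annulus $R_1=\{1<|y|\le 2\}$ by a dilation, and then to prove that inequality by classical Sobolev--Poincar\'e and Lebesgue-interpolation arguments on a bounded Lipschitz domain.

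First I would normalize by dilation. For $u\in C_c^\infty(B_1^c)$ and $\rho\ge 1$ put $v(y)=u(\rho y)$, which is smooth on $\overline{R_1}$. Using $|x|=\rho|y|$, $dx=\rho^N\,dy$ and $|\nabla_x u(x)|=\rho^{-1}|\nabla_y v(y)|$, each of the four integrals appearing in \eqref{Lemmainequality} picks up an explicit power of $\rho$: e.g.
$\int_{R_\rho}|x|^{\gamma r}|u|^r=\rho^{N+\gamma r}\int_{R_1}|y|^{\gamma r}|v|^r$,
$\int_{R_\rho}|x|^{\alpha p}|\nabla u|^p=\rho^{N+\alpha p-p}\int_{R_1}|y|^{\alpha p}|\nabla v|^p$,
$\int_{R_\rho}|x|^{\beta q}|u|^q=\rho^{N+\beta q}\int_{R_1}|y|^{\beta q}|v|^q$, and
$\int_{R_\rho}|x|^{\delta}|u|^p=\rho^{N+\delta}\int_{R_1}|y|^{\delta}|v|^p$.
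Substituting these into \eqref{Lemmainequality}, a short computation shows that the $\rho$-power on the left coincides with that of the gradient term on the right exactly because of the assumed balance $\frac1r+\frac\gamma N=a(\frac1p+\frac{\alpha-1}N)+(1-a)(\frac1q+\frac\beta N)$, and coincides with that of the last term exactly because $\delta=p\gamma+\frac{pN}r-N$. Hence \eqref{Lemmainequality} is invariant under this dilation, and it suffices to treat $\rho=1$, with a constant depending only on $N,p,q,r,a$ and the weight exponents.

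Next I would discard the weights. On $R_1$ one has $1\le|y|\le 2$, so $|y|^{\gamma r},|y|^{\alpha p},|y|^{\beta q},|y|^{\delta}$ are all pinched between two positive constants; thus the $\rho=1$ case of \eqref{Lemmainequality} is equivalent, after adjusting $C$, to the weight-free estimate
\begin{equation*}
\|v\|_{L^r(R_1)}\le C\,\|\nabla v\|_{L^p(R_1)}^{a}\,\|v\|_{L^q(R_1)}^{1-a}+C\,\|v\|_{L^p(R_1)}\tag{$\star$}
\end{equation*}
for all $v\in C^\infty(\overline{R_1})$ (after taking $r$-th roots and using $(A+B)^{1/r}\le A^{1/r}+B^{1/r}$ for $r\ge1$, the case $r<1$ being only easier). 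To prove $(\star)$, write $\bar v$ for the mean of $v$ on $R_1$: by H\"older on the bounded set $R_1$, $\|\bar v\|_{L^r(R_1)}\le C\|v\|_{L^p(R_1)}$, so it remains to control $\|v-\bar v\|_{L^r(R_1)}$. Since $R_1$ is a bounded Lipschitz domain, the Sobolev--Poincar\'e inequality gives $\|v-\bar v\|_{L^{r_0}(R_1)}\le C\|\nabla v\|_{L^p(R_1)}$ with $r_0$ the Sobolev exponent of $W^{1,p}$. Eliminating $\sigma$ via $\gamma=a\sigma+(1-a)\beta$, the hypothesis becomes $\frac1r=a(\frac1p+\frac{\alpha-1-\sigma}N)+(1-a)\frac1q$, and $\alpha-\sigma\ge 0$ then forces $\frac1r\ge a(\frac1p-\frac1N)+(1-a)\frac1q$, i.e. $r\le r_0$. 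Interpolating Lebesgue norms on $R_1$ between $L^{r_0}$ and $L^q$ (and, in the degenerate ranges of exponents, using instead the elementary embeddings $L^{r_0}(R_1)\hookrightarrow L^r(R_1)$ and the comparability of $L^q(R_1)$ and $L^p(R_1)$), followed by Young's inequality to absorb surplus powers into $\|v\|_{L^p(R_1)}$, yields $(\star)$ and hence the lemma. In the case $p=q=N$ needed for Theorem \ref{inequality} this reduces to the standard Gagliardo--Nirenberg inequality $\|v\|_{L^r(R_1)}\le C\|v\|_{W^{1,N}(R_1)}^a\|v\|_{L^N(R_1)}^{1-a}+C\|v\|_{L^N(R_1)}$ together with $\|v\|_{W^{1,N}(R_1)}\le\|\nabla v\|_{L^N(R_1)}+\|v\|_{L^N(R_1)}$.

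The only real difficulty is the bookkeeping of exponents: checking that the four dilation exponents match up exactly (this is what simultaneously pins down the parameter constraint and the formula for $\delta$), and verifying that $\alpha-\sigma\ge0$ together with the constraint places $r$ in the subcritical range so that the Sobolev--Poincar\'e step and the Lebesgue-norm interpolation apply, with all leftover powers absorbable into the lower-order term. This subcriticality, and the fact that $\rho\ge1$ on the exterior domain makes the dilation go only one way, are precisely what let us dispense with several of the parameter restrictions imposed in \cite{MR768824}.
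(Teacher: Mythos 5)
Your proposal follows essentially the same route as the paper's proof: reduce to the unit annulus $R_1$ by the dilation $v(y)=u(\rho y)$ (your check that the exponent hypothesis and the formula for $\delta$ make the estimate dilation-invariant is exactly the computation the paper performs, just run in the opposite order), strip the weights since $1\le|y|\le 2$, bound the mean $\bar v$ by $\|v\|_{L^p(R_1)}$, and control $v-\bar v$ by a Gagliardo--Nirenberg type interpolation, with $\alpha-\sigma\ge 0$ used only to place $r$ at or below the natural interpolation exponent $m$ given by $\frac1m=\frac ap+\frac{1-a}q-\frac aN$. In the main case $\frac1m>0$ (which covers the application $p=q=N$), your argument is complete: interpolating $L^m$ between $L^{p^*}$ and $L^q$ produces the exponent $a$ exactly, and H\"older then takes you from $L^m$ down to $L^r$.

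The one step that would fail as written is your treatment of the degenerate range, namely when $\frac ap+\frac{1-a}q\le\frac aN$ so that no such $m$ exists. There the natural interpolation exponent $b$ (defined by $\frac1r=b\left(\frac1p-\frac1N\right)+(1-b)\frac1q$, or $b=0$ if $r\le q$) is strictly smaller than $a$, and you propose to repair the mismatch ``by Young's inequality to absorb surplus powers into $\|v\|_{L^p(R_1)}$''. Young's inequality turns the product $\|\nabla v\|_{L^p}^b\|v-\bar v\|_{L^q}^{1-b}$ into a sum whose leftover summand is a positive power of $\|v-\bar v\|_{L^q}$ (or, after a further Sobolev step, of $\|\nabla v\|_{L^p}$), and neither of these is dominated by the additive term $\|v\|_{L^p(R_1)}$ that the lemma permits. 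The paper's mechanism is different and is genuinely needed here: one first checks $b\le a$ from the hypothesis, writes $\|\nabla v\|_{L^p}^b\|v-\bar v\|_{L^q}^{1-b}=\|\nabla v\|_{L^p}^b\,\|v-\bar v\|_{L^q}^{a-b}\,\|v-\bar v\|_{L^q}^{1-a}$, and applies the Sobolev--Poincar\'e inequality $\|v-\bar v\|_{L^q(R_1)}\le C\|\nabla v\|_{L^p(R_1)}$ (valid because $\frac1q>\frac1p-\frac1N$ in this case) only to the surplus factor $\|v-\bar v\|_{L^q}^{a-b}$, which keeps the estimate multiplicative and restores the exponent $a$ on the gradient. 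With that substitution in place of the Young step, your argument coincides with the paper's.
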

\begin{proof}
We first consider the case $\rho=1$, i.e.,  $R_1=\{x:1<|x|\leq 2\}$. Suppose that
\begin{equation} \label{Embedding: Equation 1} 
\frac{1}{m}=\frac{a}{p}+\frac{(1-a)}{q}-\frac{a}{N}>0.
\end{equation}
 Let $\overline{u}=\frac{1}{|R_1|}\int_{R_1}u$, where $|R_1|$ denotes the measure of $R_1$. By using a standard interpolation inequality {(see \cite{MR109940})}, we obtain
\begin{equation}
\label{ii}
\int_{R_1}|u-\overline{u}|^m\leq C \left( \int_{R_1}|\nabla u|^p \right)^\frac{am}{p}\left( \int_{R_1}|u-\overline{u}|^q \right)^\frac{(1-a)m}{q} .
\end{equation}
Since $a>0$, and $\alpha-\sigma \geq 0$, it follows that
\begin{equation*}
\frac{1}{r}=a \left( \frac{1}{p}+\frac{\alpha-\sigma}{N}-\frac{1}{N} \right)+(1-a)\frac{1}{q}
\end{equation*}
\begin{equation*}
\geq a \left( \frac{1}{p}-\frac{1}{N} \right)+(1-a)\frac{1}{q}
=\frac{1}{m},
\end{equation*}
and hence $r\leq m$. Applying Holder's inequality to \eqref{ii}, we get
\begin{equation}\label{Embedding: Equation 5}
\int_{R_1}|u-\overline{u}|^r\leq C \left(\int_{R_1}|u-\overline{u}|^m \right)^\frac{r}{m}\\
\leq C \left( \int_{R_1}|\nabla u|^p \right)^\frac{ar}{p} \left(\int_{R_1}|u-\overline{u}|^q \right)^\frac{(1-a)r}{q} .
\end{equation}
If (\ref{Embedding: Equation 1}) does not hold,  then $\frac{a}{p}+\frac{(1-a)}{q}\leq \frac{a}{N}$, which implies $\frac{1}{q}>\frac{1}{p}-\frac{1}{N}$. Hence, in this case, we obtain
\begin{equation} \label{Embedding: Equation 2}
\int_{R_1}|u-\overline{u}|^r
\leq C \left(\int_{R_1}|\nabla u|^p \right)^\frac{br}{p} \left(\int_{R_1}|u-\overline{u}|^q \right)^\frac{(1-b)r}{q}, 
\end{equation}
where the parameter $b$ is defined as follows: if $r\leq q$, then set $b=0$  and if $r\geq q$ then choose $b$ such that  $\frac{1}{r}=b \left(\frac{1}{p}-\frac{1}
{N} \right)+(1-b)\frac{1}{q}$. 

We now claim that in either case $b \leq a$. If $b=0$, then the inequality is immediate since $a>0$. Assume next that $b\neq 0$ so that $r\geq q$.  From the assumption $\frac{a}{p}+\frac{(1-a)}{q}\leq \frac{a}{N}$ we deduce that $\frac{1}{q}\leq a\left( \frac{1}{N}-\frac{1}{p}+\frac{1}{q} \right)$. Equivalently, $\frac{1}{q \left( \frac{1}{N}-\frac{1}{p}+\frac{1}{q} \right)}\leq a$. On the other hand, by the definition of $b$,
\begin{equation*}
b \left(\frac{1}{q}+\frac{1}{N}-\frac{1}{p} \right)= \frac{1}{q}-\frac{1}{r}
 < \frac{1}{q}.
\end{equation*}
Therefore, $ b<\frac{1}{q \left(\frac{1}{N}-\frac{1}{p}+\frac{1}{q} \right)}\leq a$, which proves our claim. Applying Sobolev's inequality we have,
\begin{equation} \label{Embedding: Equation 3}
\left( \int_{R_1}|u-\overline{u}|^q \right)^\frac{1}{q}\leq C \left( \int_{R_1}|\nabla u|^p \right)^\frac{1}{p}.
\end{equation}
On the other hand, since $b \leq a$, we may rewrite (\ref{Embedding: Equation 2}) as
\begin{equation} \label{Embedding: Equation 4}
\int_{R_1}|u-\overline{u}|^r \leq C \left( \int_{R_1}|\nabla u|^p \right)^\frac{ar}{p} \left(\int_{R_1}|\nabla u|^p \right )^\frac{(b-a)r}{p} \left(\int_{R_1}|u-\overline{u}|^q \right)^\frac{(1-a)r}{q} \left(\int_{R_1}|u-\overline{u}|^q \right)^\frac{(a-b)r}{q}.
\end{equation}
Now, using (\ref{Embedding: Equation 3}), we estimate
\begin{equation}
\left( \int_{R_1}|u-\overline{u}|^q \right )^\frac{(a-b)r}{q}\leq C \left(\int_{R_1}|\nabla u|^p \right)^\frac{(a-b)r}{p} .
\end{equation}
Substituting this into (\ref{Embedding: Equation 4}), we obtain
\begin{equation}
\int_{R_1}|u-\overline{u}|^r\leq C \left( \int_{R_1}|\nabla u|^p \right)^\frac{ar}{p} \left( \int_{R_1}|u-\overline{u}|^q \right)^\frac{(1-a)r}{q} .
\end{equation}
Thus, even in this case, we recover the same inequality as in equation (\ref{Embedding: Equation 5}).
By the change of variable formula, we have
\begin{align*}
\int_{R_\rho}|u|^r =\int_{R_1}|u(\rho x)|^r\rho ^N dx =\rho^N\int_{R_1}|u_1|^r dx ,
\end{align*}
where $u_1=u(\rho x)$. We also note that, $\left( \int_{R_1}|u-\overline{u}|^q \right)^\frac{1}{q}\leq C\left (\int_{R_1}|u|^q \right)^\frac{1}{q}$. Hence, 
\begin{align*}
\left( \int_{R_1}|u|^r \right)^\frac{1}{r} & \leq \left( \int_{R_1}|u-\overline{u}|^r \right)^\frac{1}{r} + \left( \int_{R_1}|\overline{u}|^r \right)^\frac{1}{r}  \\
& \leq C \left( \int_{R_1}|\nabla u|^p \right)^\frac{a}{p} \left(\int_{R_1}|u|^q \right)^\frac{(1-a)}{q}+C  \int_{R_1}|u|  \\
& \leq C \left( \int_{R_1}|\nabla u|^p \right)^\frac{a}{p} \left(\int_{R_1}|u|^q \right)^\frac{(1-a)}{q}+C \left(\int_{R_1}|u|^p \right)^\frac{1}{p}.
\end{align*}
Therefore
\begin{align*}
\left( \int_{R_\rho}|u|^r \right)^\frac{1}{r}
& = \rho^\frac{N}{r} \left( \int_{R_1}|u_1|^r dx \right)^\frac{1}{r} \\
& \leq C\rho^\frac{N}{r} \left( \int_{R_1}|\nabla u_1|^p \right)^\frac{a}{p} \left( \int_{R_1}|u_1|^q \right )^\frac{(1-a)}{q}+ C\rho^\frac{N}{r} \left(\int_{R_1}|u_1|^p \right)^\frac{1}{p} . 
\end{align*}
Multiplying both side by $\rho^\gamma$, we get
\begin{align*}
\begin{split}
\left( \int_{R_\rho}\rho^{\gamma r}|u|^r \right)^\frac{1}{r} & \leq C\rho^{\frac{N}{r}+\gamma} \left(  \int_{R_1}|\nabla u_1|^p \right)^\frac{a}{p} \left(\int_{R_1}|u_1|^q \right)^\frac{(1-a)}{q} + C\rho^{\frac{N}{r}+\gamma} \left( \int_{R_1}|u_1|^p \right)^\frac{1}{p} \\
& =C\rho^{\frac{N}{r}+\gamma} \left( \int_{R_1}|\nabla u(\rho x)|^p\rho^p \right)^\frac{a}{p} \left( \int_{R_1}|u(\rho x)|^q \right)^\frac{(1-a)}{q} \\ & \qquad + C\rho^{\frac{N}{r}+\gamma}\left(\int_{R_1}|u(\rho x)|^p\right)^\frac{1}{p} .
\end{split}
\end{align*}
Now using the fact that, $\gamma+\frac{N}{r}=a(\frac{N}{p}+\alpha-1)+(1-a)(\frac{N}{q}+\beta),$ we obtain
\begin{align*}
\begin{split}
\left( \int_{R_\rho}\rho^{\gamma r}|u|^r \right)^\frac{1}{r} &\leq C\left( \int_{R_1}\rho^{\alpha p}|\nabla u(\rho x)|^p\rho^N \right)^\frac{a}{p}\left( \int_{R_1}\rho^{\beta q}|u(\rho x)|^q \rho^N \right)^\frac{(1-a)}{q} \\ & \qquad + C\left(\int_{R_1}\rho^\delta|u(\rho x)|^p\rho^N\right)^\frac{1}{p} \\
& =C \left( \int_{R_\rho}\rho^{\alpha p}|\nabla u|^p \right)^\frac{a}{p}  \left( \int_{R_\rho}\rho^{\beta q}|u|^q \right)^\frac{(1-a)}{q}+C \left( \int_{R_\rho}\rho^\delta|u|^p \right)^\frac{1}{p} .
\end{split}
\end{align*}
Since $R_{\rho}$=$\{x:\rho<|x|\leq 2\rho\}, \rho>1$, depending on the signs of the exponents $\gamma r$, $\alpha p$, and $\beta q$ 
we can adjust the constants accordingly, and thus obtain
\begin{equation}
\left( \int_{R_\rho}|x|^{\gamma r}|u|^r \right)^\frac{1}{r} \leq  C \left( \int_{R_\rho}|x|^{\alpha p}|\nabla u|^p \right)^\frac{a}{p} \left( \int_{R_\rho}|x|^{\beta q}|u|^q \right)^\frac{(1-a)}{q} + C\left( \int_{R_\rho}|x|^\delta|u|^p \right)^\frac{1}{p} .
\end{equation}
Since $(a+b)^m \leq 2^m (a^m+b^m)$, for $a, b \geq0, m>0$, we get
\begin{equation*}
\int_{R_\rho}|x|^{\gamma r}|u|^r  \leq C \left( \int_{R_\rho}|x|^{\alpha p}|\nabla u|^p \right)^\frac{ar}{p} \left( \int_{R_\rho}|x|^{\beta q}|u|^q \right)^\frac{(1-a)r}{q} + C \left( \int_{R_\rho}|x|^\delta|u|^p \right)^\frac{r}{p},
\end{equation*}
for all $u\in C_c^\infty(B_1^c)$ and $\rho\geq 1$. 
\end{proof}
Using Lemma \ref{Lemma:Embedding}, we now prove Theorem \ref{inequality}.

{\bf {Proof of Theorem \ref{inequality}}:} 
We let $p=q=N,\alpha=\sigma=0 ,\beta=\frac{-(N+\theta)}{N}$ (note that $\beta$ may be taken as any sufficiently large negative number) and $0<a<1$ in the preceeding Lemma.
Then $$\gamma=(1-a)\beta=\frac{-(N+\theta)(1-a)}{N}.$$
Moreover, since $$\frac{1}{r}+\frac{\gamma}{N}=a \left(\frac{1}{p}+\frac{\alpha-1}{N} \right)+(1-a) \left(\frac{1}{q}+\frac{\beta}{N} \right),$$ 
we obtain $r=\frac{N}{1-a}$, so that $N<r<\infty$ and $\delta=-(N+\theta(1-a))$. Thus in this case, the inequality in Lemma \ref{Lemma:Embedding} reduces to 
\begin{equation}
\label{special case ineq}
\int_{R\rho}|x|^{\gamma r}|u|^r\leq C \left( \int_{R_\rho}|\nabla u|^N \right)^\frac{ar}{N} \left( \int_{R\rho}|x|^{-(N+\theta)}|u|^N \right)^\frac{(1-a)r}{N}+C \left( \int_{R\rho}|x|^\delta |u|^N \right)^\frac{r}{N},
\end{equation}
for all $u\in C_c^\infty(B_1^c)$. We now recall the following inequalities, valid for any sequence $x_k \geq 0$, $y_k \geq 0$ and exponents $c,d \geq0 $.
\begin{equation} \label{Embedding: Equation 7}
\sum x_k^c y_k^d\leq \left (\sum x_k \right )^c \left( \sum y_k \right)^d, \text{ if } c+d\geq 1 ,
\end{equation}
and
\begin{equation} \label{Embedding: Equation 8}
\sum x_k^c\leq \left(\sum x_k \right)^c, \text{ if } c\geq 1.
\end{equation}
Since, in our case $\frac{ar}{p}+\frac{(1-a)r}{q}=\frac{r}{N}>1$, and (\ref{special case ineq}) holds for all $\rho\geq 1$, adding the estimates (\ref{special case ineq}) over all annuli and applying the monotone convergence theorem together with (\ref{Embedding: Equation 7})-(\ref{Embedding: Equation 8}), we obtain 
\begin{equation}
 \int_{B_1^c}|x|^{\gamma r}|u|^r  \leq C \left( \int_{B_1^c}|\nabla u|^N \right)^\frac{ar}{N} \left( \int_{B_1^c}|x|^{-(N+\theta)}|u|^N \right)^\frac{(1-a)r}{N} + C \left( \int_{B_1^c}|x|^\delta|u|^N \right )^\frac{r}{N},
\end{equation}
for all $u\in C_c^\infty(B_1^c)$ and $N<r<\infty$. Applying the inequality $(a+b)^m \leq 2^m(a^m+b^m)$, yields 
\begin{equation}
\label{II}
\left( \int_{B_1^c}|x|^{\gamma r}|u|^r \right )^\frac{1}{r}  \leq C  \left(  \int_{B_1^c}|\nabla u|^N \right )^\frac{a}{N} \left( \int_{B_1^c}|x|^{-(N+\theta)}|u|^N \right)  ^\frac{(1-a)}{N} + C \left(  \int_{B_1^c}|x|^\delta|u|^N \right)^\frac{1}{N},
\end{equation}
for all $u\in C_c^\infty(B_1^c)$ and $N<r<\infty$. Note that from our choice of the parameters, $\gamma r=-(N+\theta)$.
\qed

We now recall an embedding of $\D_0^{1,N}(B_1^c)$ from \cite{MR3347486}. 
\begin{lemma}
\label{lemma 2.1.}\label{Compact Embedding:Anoop}
(See \cite{MR3347486}) $\D_0^{1,N}(B_1^c) $  is compactly embedded into the weighted Lebesgue space $L^N(W(|x|),B_1^c)$ for $W\in L^1((1,\infty),[r\log r]^{N-1})$.
\end{lemma}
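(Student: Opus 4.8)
The plan is to derive the continuous embedding from a single pointwise estimate along radial rays, and then to upgrade it to a compact embedding by combining that same estimate (which furnishes a fixed, globally integrable majorant) with the Rellich--Kondrachov theorem on bounded annuli and dominated convergence.

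\emph{Continuous embedding.} By the definition of $\D_0^{1,N}(B_1^c)$ it suffices to argue for $u\in C_c^\infty(B_1^c)$. Write $x=r\omega$ with $r>1$, $\omega\in S^{N-1}$. Since $u$ vanishes on $\partial B_1$, $u(r\omega)=\int_1^r\partial_t[u(t\omega)]\,dt$, so $|u(r\omega)|\le\int_1^r|\nabla u(t\omega)|\,dt$. Writing $|\nabla u(t\omega)|=\big(|\nabla u(t\omega)|^N t^{N-1}\big)^{1/N}\,t^{-(N-1)/N}$ and applying H\"older's inequality with exponents $N$ and $N/(N-1)$ produces the endpoint weight $t^{-1}$, which integrates to $\log r$, giving
\[
|u(r\omega)|^N\le(\log r)^{N-1}\int_1^\infty|\nabla u(t\omega)|^N t^{N-1}\,dt\qquad (r>1).
\]
Multiplying by $W(r)r^{N-1}$, integrating over $\omega\in S^{N-1}$ and $r\in(1,\infty)$, and using polar coordinates on the right yields
\[
\int_{B_1^c}|u|^N W(|x|)\,dx\le\Big(\int_1^\infty W(r)(r\log r)^{N-1}\,dr\Big)\,\|\nabla u\|_{L^N(B_1^c)}^N ,
\]
which is precisely the continuous embedding for the stated weight class; both this inequality and the displayed pointwise bound (for a.e.\ $r$) extend to all of $\D_0^{1,N}(B_1^c)$ by density.

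\emph{Compact embedding.} Let $(u_n)$ be bounded in $\D_0^{1,N}(B_1^c)$. Since this space is reflexive, after passing to a subsequence $u_n\rightharpoonup u$; set $w_n=u_n-u$, so $\|\nabla w_n\|_{L^N(B_1^c)}\le M$ for some $M$, and put $\rho_n(r)=\big(\int_{S^{N-1}}|w_n(r\omega)|^N\,d\omega\big)^{1/N}$. The pointwise estimate applied to $w_n$ gives $\rho_n(r)^N\le(\log r)^{N-1}M^N$, hence the $n$--independent majorant
\[
0\le\rho_n(r)^N W(r)r^{N-1}\le M^N\,W(r)(r\log r)^{N-1}=:\Phi(r),\qquad \Phi\in L^1(1,\infty),
\]
the integrability being exactly the hypothesis on $W$. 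On each bounded annulus $\Omega_R=\{1<|x|<R\}$ the same estimate bounds $\|w_n\|_{L^N(\Omega_R)}$, so $(w_n)$ is bounded in $W^{1,N}(\Omega_R)$ and converges weakly to $0$ there; since $p=N$ equals the dimension, $W^{1,N}(\Omega_R)\hookrightarrow\hookrightarrow L^N(\Omega_R)$, whence $w_n\to 0$ in $L^N(\Omega_R)$, i.e.\ $\int_1^R\rho_n(r)^N r^{N-1}\,dr\to 0$ for every $R\in\mathbb{N}$. A diagonal argument then extracts a further subsequence along which $\rho_n(r)\to 0$ for a.e.\ $r>1$, and dominated convergence with the majorant $\Phi$ gives
\[
\int_{B_1^c}|w_n|^N W(|x|)\,dx=\int_1^\infty\rho_n(r)^N W(r)r^{N-1}\,dr\longrightarrow 0 .
\]
Thus every bounded sequence in $\D_0^{1,N}(B_1^c)$ has a subsequence converging in the weighted space $L^N(W(|x|),B_1^c)$, which is the asserted compactness.

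\emph{Main difficulty.} The crux is the borderline exponent $p=N$: functions in $\D_0^{1,N}(B_1^c)$ need not belong to any unweighted $L^q$, so a weight is unavoidable, and the logarithm produced by the failure of the endpoint H\"older estimate is exactly what singles out the weight class $L^1((1,\infty);(r\log r)^{N-1})$. In the compactness argument the only delicate point is that $W$ may blow up as $|x|\to 1$, where $(r\log r)^{N-1}\to 0$; this is neutralised by the boundary condition, which makes the pointwise estimate deliver a single globally $W$--integrable majorant $\Phi$, so that no separate uniform integrability argument near $\partial B_1$ is needed and dominated convergence applies directly on all of $B_1^c$.
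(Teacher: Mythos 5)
The paper offers no proof of this lemma: it is quoted verbatim from \cite{MR3347486} and used as a black box, so there is nothing internal to compare your argument against. On its own terms your proof is correct and self-contained, and it essentially reconstructs the mechanism behind the cited result: the fundamental theorem of calculus along radial rays plus H\"older at the endpoint exponent produces the $(\log r)^{N-1}$ factor, which is exactly what forces the weight class $L^1((1,\infty);(r\log r)^{N-1})$; the same estimate then serves double duty as a uniform integrable majorant that converts local Rellich--Kondrachov compactness on annuli into global compactness via dominated convergence. The only step you assert without justification is that $w_n\rightharpoonup 0$ in $W^{1,N}(\Omega_R)$; this is fine because the restriction map $\D_0^{1,N}(B_1^c)\to W^{1,N}(\Omega_R)$ is bounded and linear (its boundedness being a consequence of your own pointwise estimate), hence weak-to-weak continuous, but a sentence to that effect would close the gap. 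With that remark added, your write-up would serve as a complete proof of the lemma that the paper only cites.
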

It is easy to verify that our weight $K \in L^1((1,\infty),[r\log r]^{N-1})$. Using Lemma \ref{Compact Embedding:Anoop} and Theorem \ref{inequality}, we now prove Theorem \ref{embedding}. \\

{\bf{Proof of Theorem \ref{embedding}}:} 
Since $C_c^\infty(B_1^c)$ is dense in $\mathcal{D}^{1,N}_0(B^c_1)$, the inequality \eqref{II} in Theorem \ref{inequality} extends to all $u \in \D_0^{1,N}(B_1^c)$, thereby establishing the continuity of the embedding of $\mathcal{D}^{1,N}_0(B^c_1)$ into $L^r(B_1^c;K(x))$, for $r\in(N,\infty)$. The continuous embedding of $\mathcal{D}^{1,N}_0(B^c_1)$ into $L^N(B_1^c;K(x))$ follows by Lemma \ref{Compact Embedding:Anoop}. Now, for $1\leq r < N$, we have
\begin{align*}
\int_{B_1^c} K(x)|u|^r &= \int_{B_1^c} K(x)^{\frac{N-r}{N}} K(x)^\frac{r}{N} |u|^r \\
& \leq \left(\int_{B_1^c} K(x) |u|^N \right)^{\frac{r}{N}} \left (\int_{B_1^c} K(x)\right)^\frac{N-r}{N},
\end{align*}
where the above inequality is obtained by applying H\"older's inequality with exponents $\frac{N}{r}$ and $\frac{N}{N-r}$. Hence, it follows that $\mathcal{D}^{1,N}_0(B^c_1)$ is continuous embedded into $L^r(B_1^c;K(x))$, for $1 \leq r < N$. 

To prove compactness, let $\{u_m\}$ be a bounded sequence in $\mathcal{D}^{1,N}_0(B^c_1)$. Then there exists $u\in \mathcal{D}^{1,N}_0(B^c_1)$ such that $u_m\rightharpoonup u$ in $\mathcal{D}^{1,N}_0(B^c_1)$. Moreover, there exists $M>0$ such that 
\begin{equation}
\left( \int_{B_1^c}|\nabla u_m-\nabla u|^N \right) ^\frac{1}{N}\leq M , \text{ for all } m.
\end{equation}
By compactness of the embeddings in Lemma \ref{Compact Embedding:Anoop}, there exists a subsequence $\{u_{m_k}\}$ of $\{u_m\}$ such that
\begin{equation}
\left( \int_{B_1^c}|x|^{-(N+\theta)}|u_{m_k}-u|^N \right)^\frac{1}{N} \rightarrow 0  \text{ , and } \left( \int_{B_1^c}|x|^\delta|u_{m_k}-u|^N \right)^\frac{1}{N} \rightarrow 0 ,
\end{equation}
as $m_k\rightarrow \infty$. Therefore, from Theorem \ref{inequality}, we have
\begin{align*}
\begin{split}
\left( \int_{B_1^c}|x|^{-(N+\theta)}|u_{m_k}-u|^r \right)^\frac{1}{r} & \leq C \left( \int_{B_1^c}|\nabla u_{m_k}-\nabla u|^N \right)^\frac{a}{N} \left( \int_{B_1^c}|x|^{-(N+\theta)}|u_{m_k}-u|^N \right) ^\frac{(1-a)}{N} \\ &\qquad+ C \left( \int_{B_1^c}|x|^\delta|u_{m_k}-u|^N \right) ^\frac{1}{N} \\
& \leq C M^a \left(\int_{B_1^c}|x|^{-(N+\theta)}|u_{m_k}-u|^N \right)^\frac{(1-a)}{N} \\
& \qquad +C\left(\int_{B_1^c}|x|^\delta|u_{m_k}-u|^N\right)^\frac{1}{N} \\ & \rightarrow 0 \text{, as } m_k\rightarrow \infty.
\end{split}
\end{align*}
This proves the compactness of the embedding $\mathcal{D}^{1,N}_0(B^c_1)$  into $L^r(B_1^c;K(x))$, for all $r\in(N,\infty)$. Similar arguments also provide the compactness of the embedding of $\mathcal{D}^{1,N}_0(B^c_1)$  into $L^r(B_1^c;K(x))$, for all $1\leq r \leq N$.
\qed
\section{Existence and regularity of solution to \eqref{Problem 1}}
In this section, we establish the existence and regularity of solutions to \ref{Problem 1}. We first recall the Mountain Pass Lemma which will be used to establish the existence. 
\begin{lemma}
(See \cite{rabinowitz1986minimax}) (Mountain Pass Lemma). \label{lem3}
Let $E$ be a real Banach space and $J \in C^1(E,\mathbb{R})$ satisfies the Palais-Smale condition ($PS$). Suppose also that
\begin{enumerate}
\item[(a)]  $J(0) = 0$,
\item[(b)] there exist constants $\rho, \alpha > 0$ such that $J(v) \geq \alpha$ for all $v \in E$ with $\| v\|_E = \rho$,			\item[(c)] there exists an $e \in E $ with $\| e\|_E > \rho$ such that $J(e) \leq 0$.
\end{enumerate}
Then $J$ possesses a critical value $c\geq \alpha$ which can be characterized as $$c = \inf _{\gamma \in \Gamma} \max _{t \in [0,1]} J(\gamma(t)),$$ where $$\Gamma= \{ \gamma \in C([0,1],E) : \gamma(0) =0 , \gamma(1)=e \}.$$
\end{lemma}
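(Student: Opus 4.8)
The final statement is the classical Ambrosetti--Rabinowitz mountain pass lemma, so the plan is to reproduce its standard proof by a quantitative deformation argument. \emph{First}, I would record that $c$ is well defined: the straight segment $\gamma_0(t)=te$ lies in $\Gamma$, so $\Gamma\neq\emptyset$ and $c<\infty$; and that $c\geq\alpha$: for any $\gamma\in\Gamma$ the function $t\mapsto\|\gamma(t)\|_E$ is continuous with $\|\gamma(0)\|_E=0<\rho<\|e\|_E=\|\gamma(1)\|_E$, so by the intermediate value theorem there is $t_\ast\in(0,1)$ with $\|\gamma(t_\ast)\|_E=\rho$, and then $\max_{[0,1]}J\circ\gamma\geq J(\gamma(t_\ast))\geq\alpha$ by (b); taking the infimum over $\Gamma$ yields $c\geq\alpha>0$.

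\emph{Second} --- and this is the heart of the matter --- I would assume for contradiction that $c$ is not a critical value of $J$ and derive a deformation. A standard consequence of the $(PS)$ condition (obtained by a shrinking-strip argument: otherwise one extracts a convergent $(PS)_c$ sequence whose limit is a critical point at level $c$) is the existence of $\bar\varepsilon\in(0,c/2)$ and $\beta_0>0$ with $\|J'(u)\|_{E^*}\geq\beta_0$ whenever $|J(u)-c|\leq\bar\varepsilon$. Since $J$ is merely $C^1$, the flow of $-\nabla J$ need not be defined, so I would instead construct a locally Lipschitz pseudo-gradient vector field $V$ on the set $\{u:J'(u)\neq 0\}$ (using paracompactness of the metric space $E$ and a locally finite partition of unity) with $\|V(u)\|_E\leq 2\|J'(u)\|_{E^*}$ and $\langle J'(u),V(u)\rangle\geq\|J'(u)\|_{E^*}^2$; multiplying $V$ by a Lipschitz cutoff that vanishes where $|J(u)-c|\geq\bar\varepsilon$ and renormalizing, I obtain a bounded, globally defined, locally Lipschitz field whose flow $\sigma(t,u)$ exists for all $t\geq 0$. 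For a suitable finite time $T$ the map $\eta:=\sigma(T,\cdot):E\to E$ is continuous and satisfies: (i) $\eta(u)=u$ whenever $|J(u)-c|\geq\bar\varepsilon$; (ii) $t\mapsto J(\sigma(t,u))$ is non-increasing; (iii) $\eta\big(\{J\leq c+\varepsilon\}\big)\subseteq\{J\leq c-\varepsilon\}$ for some $\varepsilon\in(0,\bar\varepsilon)$.

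\emph{Third}, by the definition of $c$ I may pick $\gamma\in\Gamma$ with $\max_{[0,1]}J\circ\gamma\leq c+\varepsilon$. Since $J(0)=0$ and $J(e)\leq 0$ both satisfy $|J(\cdot)-c|\geq c>\bar\varepsilon$ (using $\bar\varepsilon<c/2$), property (i) gives $\eta(0)=0$ and $\eta(e)=e$, so $\eta\circ\gamma\in\Gamma$; but (iii) gives $\max_{[0,1]}J\circ(\eta\circ\gamma)\leq c-\varepsilon<c$, contradicting the definition of $c$ as an infimum over $\Gamma$. Hence $c$ is a critical value of $J$, and $c\geq\alpha$, as claimed. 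As an alternative to the pseudo-gradient flow, I would note that one can instead invoke Ekeland's variational principle on the complete metric space $\Gamma$ equipped with the uniform distance, applied to the functional $\gamma\mapsto\max_{[0,1]}J\circ\gamma$, and combine it with $(PS)$ to locate the critical point.

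The step I expect to be the main obstacle --- or at least the only nonroutine point --- is the construction of the deformation $\eta$ in the $C^1$ category: one must substitute a pseudo-gradient flow for the gradient flow, and one must use the $(PS)$ condition at the exact level $c$ to guarantee the uniform lower bound $\|J'\|_{E^*}\geq\beta_0$ on the strip around $c$, which is precisely what makes the flow decrease $J$ across that strip. Everything else --- nonemptiness of $\Gamma$, the intermediate-value argument for $c\geq\alpha$, and the final path-pushing contradiction --- is elementary.
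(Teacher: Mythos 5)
The paper does not prove this lemma at all: it is recalled as a classical result and attributed to Rabinowitz's monograph, so there is no in-paper argument to compare against. Your proposal is the standard and correct proof of the Ambrosetti--Rabinowitz mountain pass theorem: the intermediate-value argument giving $c\geq\alpha$, the quantitative deformation lemma built from a pseudo-gradient vector field (with the $(PS)$ condition supplying, via the shrinking-strip contradiction, the uniform lower bound on $\|J'\|_{E^*}$ near level $c$), and the final path-pushing contradiction using that $\eta$ fixes $0$ and $e$ because $|J-c|\geq c>\bar\varepsilon$ there. All steps are sound; in a paper of this kind the appropriate move is exactly what the authors did, namely to cite the source rather than reproduce this argument, but nothing in your write-up is wrong.
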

We provide below the proof of Theorem \ref{existence}. The embedding that we established in Theorem \ref{embedding} will be used throughout the proof. 

{\bf {Proof of Theorem \ref{existence} }:}
Define the functional $J : \D_0^{1,2}(B_1^c) \rightarrow \mathbb{R}$ by $J(u) = \frac{1}{2} \| \nabla u \|_2 ^2 - \lambda \tilde{F}(u),$ where $\tilde{F}: \D_0^{1,2}(B_1^c) \rightarrow \mathbb{R}$ is the functional $\tilde{F}(u) =  \int_{B_1^c} K(x) F(u) dx $  and $F(u) = \int_0 ^ u f(t) dt$. We now prove that $J$ satisfies the Mountain Pass Lemma. 
\\ 
\\
\textit{Step 1: $J$ is well defined. } 

We will prove that the second part of $J$ is finite for all $u \in \D_0^{1,2}(B_1^c)$. Indeed, we estimate
\begin{align*}
|\int_{B_1^c} K(x) F(u) dx| &\leq |f(0)| \int_{B_1 ^c} K(x) |u| dx + \frac{B}{s+1} \int_{B_1 ^c} K(x) |u| ^{s+1} dx .
\end{align*}
By Theorem \ref{Theorem:Compact embedding}, the embeddings yield 
\begin{align*}
\int_{B_1 ^c} K(x) |u| ^{s+1} dx \leq C \| \nabla u \|_2^{s+1},
\end{align*}
and 
$$\int_{B_1^c} K(x) |u| dx 
\leq C \| \nabla u \|_2 .$$
Therefore,
\begin{equation}
\left| \int_{B_1^c} F(u) K(x) dx \right| \leq C \left( \| \nabla u \|_2 + \| \nabla u \|_2^{s+1} \right).
\end{equation}
This shows that $J$ is well defined. Using our embeddings and arguments analogous to those in \cite{MR4500097} and  \cite {rabinowitz1986minimax}, one easily sees that $J$ is $C^1$ and $J'(u): \D_0^{1,2}(B_1^c) \rightarrow \mathbb{R}$, given by $$J'(u)(\phi) = \int_{B_1^c} \nabla u \cdot \nabla \phi - \lambda  \tilde{F}'(u)(\phi) \text{, where } \tilde{F}'(u)(\phi) = \int_ {B_1 ^c} K(x) f(u) \phi , $$ is continuous for each $u\in \D_0^{1,2}(B_1^c)$ and the map $\tilde{F}': \D_0^{1,2}(B_1^c) \to (\D_0^{1,2}(B_1^c))'$ is compact. \\
\\
\textit{Step 2: $J$ satisfies the Palais–Smale condition.} 

Let $u_m$ be a sequence in $\D_0^{1,2}(B_1^c)$ such that $J(u_m)$ is bounded and $J'(u_m) \rightarrow 0$. To prove that $u_m$ has a convergent subsequence, we begin by showing that $u_m$ is a  bounded sequence. Since $u_m = u_m ^{+} - u_m ^{-} $, and $u_m ^+$ and $u_m ^- $ are in $\D_0^{1,2}(B_1^c)$, it is enough to show that both $u_m ^{+}$ and $u_m^{-}$ are bounded sequences. For the negative part, testing with $u_m^-$ gives
\begin{align*}
J'(u_m)(u_m ^-) &= - \int_{B_1 ^c} | \nabla u_m ^- |^2 dx - \lambda \int_{B_1 ^c} K(x) f(u_m) u_m ^- dx \\
&= -\| \nabla u_m ^- \|_2^2 - \lambda \int_{B_1 ^c} K(x) f(0) u_m ^- dx \\
&\leq  -\| \nabla u_m ^- \|_2^2 +  C \lambda  |f(0)| \| \nabla u_m^- \|_2 .
\end{align*} 
Hence,
\begin{equation}\label{Equation 1}
\| \nabla u_m ^- \|_2 ^2 \leq | J'(u_m)(u_m ^ -)| + C \lambda |f(0)| \| \nabla u_m^- \|_2 ,
\end{equation} 
from which it follows that $u_m ^-$ is bounded in $\D_0^{1,2}(B_1^c)$. Boundedness of $J(u_m)$ implies the existence a constant $C > 0$ such that
\begin{equation} \label{Equation 2}
\frac{1}{2} \| \nabla u_m \|_2 ^2 - \lambda \int_{B_1 ^c} K(x) G(u_m) dx - \lambda \int_{B_1 ^c} K(x) f(0) u_m dx \leq C .
\end{equation}
Since $u_m^-$ is bounded,
\begin{equation}\label{Equation 3}
\int_{B_1^c} K(x) f(0) u_m dx \leq \int_{B_1 ^c} K(x) |f(0)| u_m ^ - dx \leq C.
\end{equation}
Moreover, we have $G(u_m) \leq G(u_m ^+)$ and $ \| \nabla u_m ^+ \|_2 ^2 \leq \|  \nabla u_m \|_2 ^2$ for all $m$. Substituting this along with (\ref{Equation 3}) into (\ref{Equation 2}) gives
\begin{equation}\label{Equation 4}
\frac{1}{2} \| \nabla u_m ^+ \|_2 ^2 - \lambda \int_{B_1 ^c} K(x) G(u_m ^+) dx \leq C .
\end{equation} 
By assumption ($H4$), we have $ - G(u_m ^ +) \geq - \frac{u_m ^+}{\mu} g(u_m ^+).$ Hence,
\begin{equation}\label{Equation 5}
\frac{1}{2} \| \nabla u_m ^+ \|_2 ^2 - \frac{\lambda}{\mu} \int_{B_1 ^c} K(x) u_m ^+ g(u_m ^+) dx \leq C.
\end{equation}
On the other hand, 
\begin{equation}\label{Equation 6}
J'(u_m)(u_m ^+) = \| \nabla u_m ^+ \|_2 ^2 - \lambda \int_{B_1 ^c} K(x) g(u_m ^+) u_m ^+  dx - \lambda \int_{B_1 ^c} K(x) f(0) u_m ^+ dx .
\end{equation}
Combining the equations (\ref{Equation 5}) and (\ref{Equation 6}), we obtain
\begin{align}
C 
&\geq (\frac{1}{2} - \frac{1}{\mu}) \| \nabla u_m ^+ \|_2 ^2 + \frac{1}{\mu} J'(u_m) (u_m ^ +) + C \frac{\lambda}{\mu} f(0)  \| \nabla u_m ^+ \|_2 . \label{Equation 7}
\end{align}
As $\mu > 2$, from (\ref{Equation 7}) it follows that $u_m ^+$ is bounded in $\D_0^{1,2}(B_1^c).$ Therefore, $u_m$ is bounded. Since $\D_0^{1,2}(B_1^c)$ is reflexive, $u_m $ has a subsequence, again denoted by $u_m$, such that $u_m \rightharpoonup u$ for some $u \in \D_0^{1,2}(B_1^c)$. The strong convergence $u_m \rightarrow u$ then follows as in \cite{MR4500097}, by using the facts that $J'(u_{m}) \to 0$ and that the map $\tilde{F}'$ is compact. Hence, $J$ satisfies the Palais–Smale condition.\\
\\
\textit{Step 3: The Mountain-Pass geometry of $J$.} 

Clearly $J(0)=0$. Let $\rho =1$ and $u \in \D_0^{1,2}(B_1^c)$ be such that $\| \nabla u \|_2 = 1$. Then
\begin{equation}\label{Equation 11}
J(u) = \frac{1}{2} - \lambda \int_{B_1 ^c} K(x) \left[ G(u) +f(0)u \right] dx.
\end{equation}
We estimate the integrals on the right hand side of (\ref{Equation 11}) as follows. We have
\begin{align*}
\int_{B_1 ^c} K(x) G(u) dx  &\leq \frac{B}{s+1} \int_{B_1^c} K(x) |u|^{s+1} dx \\
&\leq C \| \nabla u \|_2^{s+1},
\end{align*}
and
\begin{equation*}
\int_{B_1 ^c} K(x) f(0)u \leq C |f(0)| \| \nabla u \|_2.
\end{equation*}
Hence
\begin{equation}\label{Equation 9}
J(u) \geq \frac{1}{2} - \lambda C  \| \nabla u \|_2 ^{s+1} - \lambda C |f(0)| \| \nabla u \|_2  .
\end{equation}
Thus, there exists $\lambda_0 >0 $ such that, if $\lambda \leq \lambda_0$, $J(u) > \frac{1}{2}$.
	
To verify the third condition of the Mountain Pass Lemma, let $w >0$ be in $C_c^{\infty} (B_1 ^c)$, $t>0$, and consider
\begin{align*}
J(tw) &= \frac{t^2}{2} \| \nabla w \|_2^2 - \lambda \int_{B_1 ^c} F(tw) K(x) \\
&= \frac{t^2}{2} \| \nabla w \|_2^2- \lambda \int_{\text{supp}(w)} \left[G(tw)+f(0)tw \right] K(x) .
\end{align*} 
By (H3), we have $G(t) \geq \frac{A}{s+1} t^\mu$. Substituting into the above expression, we get
\begin{equation}\label{Equation 10}
J(tw) \leq \frac{t^2}{2} \| \nabla w \|_2^2 - \lambda \frac{A}{s+1} t ^ \mu \int_{\text{supp}(w)} K(x) w^ \mu + \lambda |f(0)| t\int_{\text{supp}(w)} K(x) w . 
\end{equation}
Since $\mu > 2$, the leading term on the right-hand side is negative and grows faster than the others as $t \rightarrow \infty$. Hence, from (\ref{Equation 10}), we conclude that $J(tw) \rightarrow - \infty$ as $t \rightarrow \infty$. In particular, we can choose  $t>0$ large enough so that $J(tw) <0$. Thus, all the conditions of the Mountain Pass Lemma are satisfied and the functional $J$ has a critical point, say $u$. By standard regularity results it follows that $u \in C^2(B_1^c) \cap C_{loc}^{1,\alpha}(\overbar{B_1^c})$ (see \cite{MR737190}, \cite{Tolksdorf-84}).

\qed

Next, we proceed to prove boundedness of the solution stated in Theorem \ref{regularity}. For this, we recall the following result from \cite{MR3925556}.
\begin{lemma} 
\label{lemma 2.2.}The Kelvin transform $\mathcal{K}$ defined by $\hat{u}(x)=(\mathcal{K}u)(x)=u(\frac{x}{|x|^2})$ provides an order-preserving, isometric isomorphism from $H=H_0^1(B_1)$ to  $\mathcal{D}^{1,2}_0(B^c_1)$. 
\end{lemma}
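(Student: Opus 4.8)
The plan is to exploit the conformal invariance of the Dirichlet integral in dimension two, the inversion $\Phi(x)=x/|x|^2$ being the relevant (anti)conformal map. Concretely, I would proceed in four steps: (i) prove the norm identity $\|\nabla(\mathcal Ku)\|_{L^2(B_1^c)}=\|\nabla u\|_{L^2(B_1)}$ for $u\in C_c^\infty(B_1)$ by a direct Jacobian computation; (ii) check that $\mathcal Ku$ lands in $\mathcal D^{1,2}_0(B_1^c)$ using the characterization $\mathcal D^{1,2}_0(B_1^c)=\widehat{\mathcal D}^{1,2}_0(B_1^c)$ recalled in the introduction, and then extend $\mathcal K$ to all of $H$ by density; (iii) deduce surjectivity from the involution identity $\Phi\circ\Phi=\mathrm{id}$; and (iv) read off order preservation once $\mathcal Ku$ is identified with $u\circ\Phi$ pointwise a.e.

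For (i): since $D\Phi(y)=|y|^{-2}\bigl(I-2\,\widehat y\,\widehat y^{\,T}\bigr)$ with $\widehat y=y/|y|$, and $I-2\widehat y\widehat y^{\,T}$ is an orthogonal reflection, one gets $D\Phi(y)^{T}D\Phi(y)=|y|^{-4}I$ and $|\det D\Phi(y)|=|y|^{-4}$. Hence for $u\in C_c^\infty(B_1)$ and $\widehat u=u\circ\Phi$ one has $|\nabla\widehat u(y)|^2=|y|^{-4}\,|(\nabla u)(\Phi(y))|^2$, and the change of variables $x=\Phi(y)$ (under which $|\det D\Phi(y)|\,dy$ corresponds to $dx$) gives
\begin{equation*}
\int_{B_1^c}|\nabla\widehat u|^2\,dy=\int_{B_1^c}|y|^{-4}\,|(\nabla u)(\Phi(y))|^2\,dy=\int_{B_1}|\nabla u|^2\,dx .
\end{equation*}

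For (ii): for $u\in C_c^\infty(B_1)$, $\widehat u$ is smooth on $\overline{B_1^c}$, bounded, of finite Dirichlet energy by (i), and vanishes on $\partial B_1$ (where $\Phi$ is the identity); thus $\widehat u\in L^{1,2}(B_1^c)$, $\widehat u\in L^2(B_1^c\cap B_R)$ for every $R>1$, and $\eta\widehat u\in W_0^{1,2}(B_1^c)$ for every $\eta\in C_c^\infty(\mathbb R^2)$ (a compactly supported $W^{1,2}$ function with zero trace on $\partial B_1$), so $\widehat u\in\widehat{\mathcal D}^{1,2}_0(B_1^c)=\mathcal D^{1,2}_0(B_1^c)$. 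Then, given $u\in H$, I would take $u_n\in C_c^\infty(B_1)$ with $u_n\to u$ in $H$; by (i) and linearity $\{\widehat u_n\}$ is Cauchy in $\mathcal D^{1,2}_0(B_1^c)$, and $\mathcal Ku$ is defined as its limit. Passing to a subsequence along which $u_n\to u$ a.e. on $B_1$ and, using the embedding of Lemma~\ref{Compact Embedding:Anoop}, $\widehat u_n\to\mathcal Ku$ a.e. on $B_1^c$, identifies $\mathcal Ku=u\circ\Phi$ a.e.; this makes $\mathcal K$ a well-defined linear isometry of $H$ into $\mathcal D^{1,2}_0(B_1^c)$, in particular injective.

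For (iii) and (iv): since $\Phi\circ\Phi=\mathrm{id}$, for $v\in C_c^\infty(B_1^c)$ the function $v\circ\Phi$ has support equal to the compact set $\Phi(\operatorname{supp}v)\subset B_1\setminus\{0\}$, so $v\circ\Phi\in C_c^\infty(B_1)\subset H$ and $\mathcal K(v\circ\Phi)=v$; hence $\operatorname{Range}(\mathcal K)\supset C_c^\infty(B_1^c)$, and since this range is closed (isometric image of a complete space) and dense, $\mathcal K$ is onto. Order preservation is then immediate: $\Phi$ is bi-Lipschitz on compact subsets of $B_1\setminus\{0\}$ and $\{0\}$ is Lebesgue-null, so $\Phi$ sends null sets to null sets both ways, whence $u\ge0$ a.e. on $B_1$ $\iff u\circ\Phi\ge0$ a.e. on $B_1^c$ $\iff\mathcal Ku\ge0$ a.e. I expect step (ii) to be the only delicate point: $\mathcal K$ does \emph{not} preserve compact support (a bump equal to $1$ near the origin transforms to a function tending to $1$ at infinity), so one genuinely needs the weak description of the Beppo--Levi space to see the inclusion, and one needs the a.e. identification of the density limit with $u\circ\Phi$ for order preservation to be meaningful; steps (i), (iii), (iv) are routine once (ii) is secured.
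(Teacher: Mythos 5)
Your proof is correct. Note that the paper itself does not prove this lemma: it is recalled verbatim from Carl--Costa--Fotouhi--Tehrani \cite{MR3925556} (Theorem 2.6 there), so there is no in-paper argument to compare against. Your four steps supply exactly the standard justification: the conformal invariance of the Dirichlet integral in dimension two via $D\Phi(y)^{T}D\Phi(y)=|y|^{-4}I$ and $|\det D\Phi(y)|=|y|^{-4}$, the use of the Simader--Sohr characterization $\mathcal{D}^{1,2}_0(B_1^c)=\widehat{\mathcal{D}}^{1,2}_0(B_1^c)$ to see that $u\circ\Phi$ lies in the Beppo--Levi space even though compact support is destroyed, surjectivity from $\Phi\circ\Phi=\mathrm{id}$ together with the closedness of the isometric image and the density of $C_c^\infty(B_1^c)$, and order preservation from the fact that $\Phi$ maps null sets to null sets. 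You correctly isolate step (ii) as the only delicate point. One small simplification: for the a.e.\ identification $\mathcal{K}u=u\circ\Phi$ you do not really need the weighted embedding of Lemma~\ref{Compact Embedding:Anoop}; since $u_n\to u$ in $L^2(B_1)$ you may pass to a subsequence converging a.e.\ on $B_1$, push this forward through $\Phi$ (which preserves null sets), and then only use that convergence in $\mathcal{D}^{1,2}_0(B_1^c)$ forces a.e.\ convergence of a further subsequence, which is exactly what the weighted embedding provides with, say, $W(r)=r^{-4}$ --- so the argument as written does go through, it just deserves that one explicit choice of a strictly positive admissible weight.
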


Using lemma \ref{lemma 2.2.}, one can see that  if $u \in \mathcal{D}^{1,2}_0(B^c_1)$ is a weak solution of \eqref{Problem 1}, then $\hat u$ is a weak solution to the problem,
\begin{equation}
\label{ballproblem}
\begin{cases}
-\Delta \hat{u} =\lambda \hat{K}(x)f(\hat{u})\frac{1}{|x|^4} \mbox{ in } B_1, \\
\hat{u} =0 \hspace{2.5cm}  \mbox{ on } \partial B_1,
\end{cases}
\end{equation}
where $B_1$ is the open unit ball in $\mathbb{R}^2$, and $\hat{K}(x)=K(\frac{x}{|x|^2})$ (see \cite{MR3925556} for details). We note that for the class of weight functions that we consider, this problem may have a singularity at the origin. 

{\bf {Proof of Theorem \ref{regularity}} :} Let $u \in \mathcal{D}_0^{1, 2}(B_1^c)$ be a solution to \eqref{Problem 1}. Then, as mentioned above $\hat{u}$ is a weak solution to the problem \eqref{ballproblem}. Define $\zeta(x)=\hat{K}(x) f(\hat{u})\frac{1}{|x|^4}$ and note that if $\zeta\in L^\eta(B_1)$ for some $\eta>1$, then standard elliptic regularity results (see \cite{MR2777537},\cite{MR737190}) imply that $\hat{u}\in C^\alpha(\overbar B_1)$. Since $K(x)\leq \frac{1}{|x|^{2+\theta}}$ for some $\theta>0$ for all $x \in B_1^c,$ $\hat{K}(x) \leq |x|^{2+\theta}$ for all $x \in B_1.$ To show that for each $\theta>0$ there exists $\eta>1$ such that $\zeta\in L^\eta(B_1)$, we use the Kelvin transform and estimate the integral as follows.
\begin{align*}
 \int_{B_1} |\zeta|^\eta dx &\leq \int_{B_1}\frac{|x|^{(2+\theta)\eta}}{|x|^{4\eta}}|f(\hat{u})|^\eta  dx \\
 &= \int_{B_1^c}\frac{|x|^{4\eta}}{|x|^{(2+\theta)\eta}}|f(u)|^\eta \frac{1}{|x|^4} dx \\
 &=\int_{B_1^c}\frac{1}{|x|^{4+(\theta-2)\eta}}|f(u)|^\eta dx \\
 &\leq \tilde C_1 \int_{B_1^c}\frac{1}{|x|^{4+(\theta-2)\eta}} dx +\tilde C_2 \int_{B_1^c}\frac{1}{|x|^{4+(\theta-2)\eta}}|u|^{s\eta} dx .
\end{align*}
Since $\theta>0$, we have $4+(\theta-2)\eta>4-2\eta$. Consequently, there exists $\eta>1$ such that $4+(\theta-2)\eta>2$, by the continuity of the function $4+(\theta-2)\eta$ with respect to $\eta$. Thus, by our embedding result, it follows that the above integral is finite. Hence, $\hat{u}\in C^\alpha(\overbar B_1)$. In particular $\lim_{x\rightarrow 0} \hat{u}(x)$ exists, which implies $\lim_{x\rightarrow \infty}{u}(x)$ also exists. This shows that $u\in C^\alpha(\overbar B_1^c)\cap L^{\infty}(B_1^c)$.
\qed
\section{Positivity of the solution}
In this section, we first prove Theorem \ref{Greens} which provides a representation formula for solutions to \eqref{Problem 1}.

{\bf  Proof of Theorem \ref{Greens} :} Since $u \in \mathcal{D}^{1,2}_0(B^c_1)$ is a  solution of \eqref{Problem 1}, it's Kelvin transform $\hat{u}$ is a solution of \eqref{ballproblem}. Because $K$ satisfies $(H5)$ and $f$ satisfies $(H3)$, following the same arguments as in the proof of Theorem \ref{regularity}, there exists $\eta>2$ such that $\zeta(x)=\hat{K}(x) f(\hat{u})\frac{1}{|x|^4} \in L^\eta(B_1)$. By standard elliptic regularity results (see \cite{MR737190},\cite{MR2777537}), this implies that $\hat{u} \in C^{1, \alpha}(\overline{B_1})$ for some $\alpha \in (0, 1)$. In particular, there exists a constant $M>0$ such that $|\nabla\hat{u}|\leq M$. 

Now let $x\in B_1 \setminus \{0\}$. Choose $r>0$ small enough so that $r<|x|<1$, and let $\phi(x) = \frac{-1}{2\pi} \log|x|$, the fundamental solution of Laplace's equation. Applying the divergence theorem to $\hat{u}(y) \in C^2(B_1\setminus \{0\}) \cap C^{1, \alpha}(\overline{B_1})$ and $\phi(y-x)$  in the annulus $r<|x|<1$, and following the derivation of Green’s function in Chapter 2 of \cite{MR1625845}, we obtain
\begin{align}
\label{1}
\begin{split}
\hat{u}(x)&=\int_{\partial B_1} \left[ \phi(y-x)\frac{\partial \hat{u}(y)}{\partial \nu}-\hat{u}(y)\frac{\partial \phi(y-x)} {\partial \nu} \right] ds(y) \\  & \quad  + \int_{\partial B_r} \left[ \phi(y-x)\frac{\partial \hat{u}(y)}{\partial \nu} -\hat{u}(y)\frac{\partial \phi(y-x)} {\partial \nu} \right] ds(y) - \int_{B_1 \setminus B_r}\phi(y-x)\Delta \hat{u}(y) dy.  
\end{split}
\end{align}
Note that, we have 
$$\left|\frac{\partial \phi(y-x)} {\partial \nu} \right| \leq \frac{1}{2\pi}\frac{1}{|y-x|}\leq \frac{1}{2\pi}\frac{1}{|x|-|y|}=\frac{1}{2\pi}\frac{1}{|x|-r} , $$ 
and 
$$|\phi(y-x)|=\frac{1}{2\pi}|\log(|y-x|)|\leq \frac{1}{2\pi}|\log(|x| +|y|)|=\frac{1}{2\pi}\log (\frac{1}{|x|+|y|})=\frac{1}{2\pi}\log (\frac{1}{|x|+r}) .$$ 
Since $\hat{u}\in C^{1,\alpha}(\overline{B_1})$, there exist constants $M, N>0$ such that $|\hat{u}|\leq M$ and $\left| \frac{\partial \hat{u}(y)}{\partial \nu} \right| \leq N$. Therefore, we have the following estimates:
\begin{equation}
\label{2}
\left| \int_{\partial B_r}\phi(y-x)\frac{\partial \hat{u}(y)}{\partial \nu} ds(y) \right| \leq \frac{N}{2\pi}\int_{\partial B_r}\log\left( \frac{1}{|x|+r} \right) ds(y)=N  r \log \left(\frac{1}{|x|+r} \right),
\end{equation}
and
\begin{equation}
\label{3}
\left| \int_{\partial B_r}\hat{u}(y)\frac{\partial \phi(y-x)} {\partial \nu} ds(y) \right| \leq \frac{M}{2\pi} \int_{\partial B_r}\frac{1}{|x|-r}=M  \frac{r}{|x|-r}.
\end{equation}
Letting $r\rightarrow 0$ in \eqref{1}-\eqref{3}, we get
\begin{equation}
\label{P1}
\hat{u}(x)=\int_{\partial B_1} \left[ \phi(y-x)\frac{\partial \hat{u}(y)}{\partial \nu}-\hat{u}(y)\frac{\partial \phi(y-x)} {\partial \nu} \right] ds(y)-\int_{B_1}\phi(y-x)\Delta \hat{u}(y) dy.
\end{equation}
Next, for a fixed $x\in B_1 \setminus \{0\}$, define the corrector function $$h^x(y)=\frac{-1}{2\pi}\log \left (|x|\left |y-\frac{x}{|x|^2}\right | \right ) \mbox{ , } y\in B_1.$$ Clearly, $-\Delta h^x(y)=0$ in $B_1$ and $h^x(y)=\frac{-1}{2\pi}\log(|y-x|)$ on $\partial B_1$. Applying the divergence theorem to $h^x(y)$ and $\hat{u}(y)$, we get
\begin{align}
\label{4}
\begin{split}
-\int_{B_1\setminus B_r}h^x(y)\Delta \hat{u}(y) dy &= \int_{\partial B_1} \left[ \hat{u}(y)\frac{\partial h^x(y)}{\partial \nu}-h^x(y)\frac{\partial \hat{u}(y)}{\partial \nu} \right] ds(y) \\ & \quad +\int_{\partial B_r} \left[ \hat{u}(y)\frac{\partial h^x(y)}{\partial \nu}-h^x(y)\frac{\partial \hat{u}(y)}{\partial \nu} \right]ds(y).  
\end{split}
\end{align}
Proceeding as before and letting $r\rightarrow 0$ in \eqref{4}, we obtain
\begin{equation}
\label{P2}
-\int_{B_1}h^x(y)\Delta \hat{u}(y) dy=\int_{\partial B_1} \left[ \hat{u}(y)\frac{\partial h^x(y)}{\partial \nu}-h^x(y)\frac{\partial \hat{u}(y)}{\partial \nu} \right] ds(y) .
\end{equation}
Adding \eqref{P1} and \eqref{P2}, we get
\begin{align}
\label{P3}
\begin{split}
\hat{u}(x)&=\int_{\partial B_1}\left[ \left( \phi(y-x)-h^x(y) \right) \frac{\partial \hat{u}(y)}{\partial \nu} - \hat{u}(y)\left(\frac{\partial \phi(y-x)} {\partial \nu}-\frac{\partial h^x(y)}{\partial \nu} \right) \right] ds(y)\\ & \quad -\int_{B_1}(\phi(y-x)-h^x(y))\Delta \hat{u}(y) dy.
\end{split}
\end{align}
Using the facts that $\hat{u}(y)=0$ and $h^x(y)=\phi(y-x)$ on $\partial B_1$, and defining $$G(x,y)=\phi(y-x)-h^x(y) \mbox{ , } x,y\in B_1 \mbox{ , } x\neq y , $$ we finally obtain
\begin{equation}
\hat{u}(x)=-\int_{B_1}G(x,y)\Delta \hat{u}(y) dy=\int_{B_1}G(x,y)\lambda \hat{K}(y)f(\hat{u}(y))\frac{1}{|y|^4} dy.
\end{equation}
Applying the Kelvin transform to $\hat{u}$, we then get
\begin{equation}
u(x)=\int_{B_1^c}\lambda G(\frac{x}{|x|^2},\frac{y}{|y|^2})K(y
)f(u(y)) dy \mbox{ , } x\in B_1^c.
\end{equation}
A straightforward calculation shows that $G(\frac{x}{|x|^2},\frac{y}{|y|^2})=G(x,y)$, for all $x,y\in B_1^c$.
\qed

\begin{remark}
Since
$ |\nabla \hat{u}(x)|=\frac{1}{|x|^2}|\nabla u(\frac{x}{|x|^2})|$, for all $ x \in B_1 \setminus \{0\} $ and $|\nabla \hat{u}(x)|$ is bounded, it follows that $|\nabla u(x)|\leq\frac{M} {|x|^2}$, for all $x\in B_1^c $. Thus, we observe that solutions have a decaying gradient.     
\end{remark} 
Finally, we are ready to establish the positivity of the solution obtained via the Mountain Pass Lemma.
To achieve this, we first derive suitable estimates for the norm of the solution.	
\begin{lemma}\label{lem7}
Let $u$ be the Mountain Pass solution of the problem (\ref{Problem 1}). Then there exist constants $K_1$, $K_2>0$ and $\overbar{\lambda}>0$ such that $K_1 \lambda^{-\frac{1}{s-1}} \leq  \| \nabla u \|_2 \leq K_2 \lambda^{-\frac{1}{s-1}}$ for $ \lambda \leq \overbar{\lambda}$.
\end{lemma}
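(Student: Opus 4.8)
The plan is to extract the two-sided bound on $\|\nabla u\|_2$ by exploiting the mountain pass characterization together with the structural assumptions (H3)--(H4). I will write $\|u\| := \|\nabla u\|_2$ throughout. First I would establish the \emph{upper} bound. Since $u$ is the critical point produced by Lemma \ref{lem3}, we have $c = J(u) = \inf_{\gamma \in \Gamma}\max_{t\in[0,1]} J(\gamma(t))$. Using the explicit test path $\gamma(t) = t\,(Tw)$ for a fixed $w \in C_c^\infty(B_1^c)$, $w > 0$, with $T$ large enough that $J(Tw) \le 0$ (as in Step 3 of the proof of Theorem \ref{existence}), the bound \eqref{Equation 10} gives
$$
c \le \max_{t \ge 0}\left[ \frac{t^2}{2}\|\nabla w\|_2^2 - \lambda \frac{A}{s+1} t^\mu \int_{\mathrm{supp}(w)} K(x) w^\mu + \lambda |f(0)| t \int_{\mathrm{supp}(w)} K(x) w \right].
$$
Dropping the negative middle term and the last term for an upper estimate, or more carefully optimizing the dominant balance between the $t^2$ and $t^\mu$ terms, one finds that the maximum over $t$ of $\frac{t^2}{2}a - \lambda b t^\mu$ behaves like $C\,(\lambda b)^{-2/(\mu-2)} = C\lambda^{-2/(s-1)}$ since $\mu - 2 = s - 1$; the linear-in-$t$ correction from $f(0)$ is lower order and absorbed. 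Hence $c \le C\lambda^{-2/(s-1)}$ for $\lambda$ small.

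Next I would convert the energy bound on $c$ into a bound on $\|u\|$. Testing $J'(u) = 0$ against $u^+$ and combining with $J(u) = c$ exactly as in \eqref{Equation 5}--\eqref{Equation 7} (now with $J'(u)(u^+) = 0$) yields
$$
c = J(u) \ge \left(\tfrac12 - \tfrac1\mu\right)\|\nabla u^+\|_2^2 - C\lambda|f(0)|\|\nabla u^+\|_2 - C,
$$
where the $-C$ accounts for the $u^-$ contributions, which are uniformly bounded by \eqref{Equation 1} and \eqref{Equation 3}. Since $\mu > 2$, this gives $\|\nabla u^+\|_2^2 \le C(c + 1 + \|\nabla u^+\|_2)$, hence $\|\nabla u^+\|_2 \le C(1 + \sqrt{c}) \le C\lambda^{-1/(s-1)}$; together with the uniform bound on $\|\nabla u^-\|_2$ this gives $\|\nabla u\|_2 \le K_2\lambda^{-1/(s-1)}$.

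For the \emph{lower} bound I would use $J'(u)(u^+) = 0$ in the other direction. This reads $\|\nabla u^+\|_2^2 = \lambda\int K(x) g(u^+)u^+ + \lambda\int K(x) f(0) u^+$. By (H3), $g(t)t \le Bt^{s+1}$, and by Theorem \ref{Theorem:Compact embedding} the embedding $\mathcal D^{1,2}_0(B_1^c)\hookrightarrow L^{s+1}(B_1^c; K)$ and $L^1(B_1^c; K)$ give $\int K g(u^+)u^+ \le C\|\nabla u^+\|_2^{s+1}$ and $\int K|f(0)| u^+ \le C\|\nabla u^+\|_2$. Therefore
$$
\|\nabla u^+\|_2^2 \le C\lambda \|\nabla u^+\|_2^{s+1} + C\lambda \|\nabla u^+\|_2.
$$
Now one must rule out $u^+ \equiv 0$ (otherwise $u \le 0$, contradicting $J(u) = c \ge \alpha > 0$ since $J$ of a nonpositive function is $\ge \frac12\|\nabla u\|_2^2 \ge 0$ but the mountain pass level forces $u^+\neq 0$); granting $\|\nabla u^+\|_2 > 0$ we may divide, and for $\lambda$ small the linear term is dominated, giving $1 \le C\lambda\|\nabla u^+\|_2^{s-1} + \tfrac12$, i.e. $\|\nabla u^+\|_2 \ge (2C\lambda)^{-1/(s-1)} = K_1'\lambda^{-1/(s-1)}$. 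Since $\|\nabla u\|_2 \ge \|\nabla u^+\|_2$, the lower bound $\|\nabla u\|_2 \ge K_1\lambda^{-1/(s-1)}$ follows.

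The main obstacle I anticipate is handling the linear $f(0)$ terms cleanly: because $f(0) < 0$, these terms have indefinite sign after testing and must be shown to be genuinely lower-order relative to the superlinear/quadratic terms in the regime $\|\nabla u\|_2 \sim \lambda^{-1/(s-1)} \to \infty$. This requires care — e.g., Young's inequality to absorb $\lambda\|\nabla u^+\|_2$ into $\varepsilon\|\nabla u^+\|_2^2 + C_\varepsilon\lambda^2$ and then checking $\lambda^2$ is negligible against $\lambda^{-2/(s-1)}$ — but presents no essential difficulty. A secondary point is confirming that $u^+ \not\equiv 0$, which follows from $J(u) = c \ge \alpha > 0$ and the fact that $G, g$ vanish on $(-\infty, 0]$ so that $J$ restricted to nonpositive functions is nonnegative and cannot realize a strictly positive mountain pass level only through the quadratic term unless $u^+ \neq 0$ — I would spell this out via the sign analysis of \eqref{Equation 1}.
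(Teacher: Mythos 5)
Your proposal is correct and follows essentially the same route as the paper: both derive $J(u)\le C\lambda^{-2/(s-1)}$ from the mountain pass characterization together with \eqref{Equation 10}, convert this into the upper bound via (H4) and the embeddings of Theorem \ref{Theorem:Compact embedding}, and obtain the lower bound by testing $J'(u)=0$ against the solution and using (H3), with the small-norm alternative excluded because $J(u)\ge\alpha>0$ forces $\|\nabla u\|_2$ to be bounded below. The only immaterial differences are that the paper works with $u$ rather than $u^{+}$; do note that the sign in your parenthetical is reversed --- for $u\le 0$ one has $F(u)=f(0)u\ge 0$ and hence $J(u)\le\tfrac12\|\nabla u\|_2^2$, and $u^{+}\not\equiv 0$ instead follows, as you suggest, from \eqref{Equation 1} with $J'(u)=0$, which forces $\|\nabla u^{-}\|_2=O(\lambda)$ and hence $J(u)\to 0$, contradicting $J(u)\ge\alpha$.
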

\begin{proof}
Note that
\begin{align*}
\| \nabla u \|_2^2 &= \lambda \int_{B_1 ^c} K(x)[g(u)+f(0)] u \\
& \leq \lambda C \left \{\|\nabla u\|_2 ^{s+1} + \lambda \|\nabla u\|_2\right \}.
\end{align*}
Now, division by $\|\nabla u\|_2^2$ and a  rearrangement of the terms give $\|\nabla u\|_2 ^{s-1} \geq C \left[ \lambda^{-1} - \frac{1}{\|\nabla u\|_2} \right] .$ We find $\|\nabla u\|_2$ is bounded from below as $J(u)$ is bounded from below. Therefore, for $\lambda>0$ small enough, there exists $K_1>0$ such that $\| \nabla u \|_2 \geq K_1 \lambda^{\frac{-1}{s-1}}$. Now, consider $w \in C_c^\infty(B_1^c)$, and $t>1$. It can be shown similarly as in \cite{MR4500097} that
\begin{align*}
\frac{d}{dt}(J(tw)) 
& \leq ta - \lambda t^s b + \lambda c ,
\end{align*}
for some constants $a$, $b$, and $c.$ Choosing $t \geq C \lambda^{\frac{-1}{s-1}}$ and noting $s>1$, it follows that $\frac{dJ}{dt} < 0$ for $\lambda$ sufficiently small. From equation (\ref{Equation 10}), we come to the conclusion that $J(tw) \leq J(C \lambda^{\frac{-1}{s-1}} w) \leq  \lambda^{\frac{-2}{s-1}} \|\nabla w\|_2^2$. Letting $\|\nabla w\|_2 =1$, we can see that $J(tw) \leq C \lambda^{\frac{-2}{s-1}}$. Using the characterization for Mountain-Pass critical point, we get
\begin{equation} \label{eqn33}
J(u) \leq C \lambda^{\frac{-2}{s-1}}.
\end{equation}
Now since $u$ is weak solution we have
\begin{equation}
\|\nabla u \|_2^2=\lambda \int_{B_1^c}K(x)f(0)u+\lambda\int_{B_1^c}K(x)ug(u)
\end{equation}
We also have by (H4)
\begin{equation*} 
\begin{aligned}
    J(u) &= \frac{1}{2} \|\nabla u \|_2^2-\lambda \int_{B_1^c}K(x)f(0)u-\lambda\int_{B_1^c}K(x)G(u) \\
    &\geq \frac{1}{2} \|\nabla u \|_2^2-\lambda \int_{B_1^c}K(x)f(0)u-\frac{\lambda}{\mu}\int_{B_1^c}K(x)ug(u).
\end{aligned}
\end{equation*}
Hence, \begin{equation*} \label{eqn34}
    \begin{aligned}
        J(u) &\geq \frac{1}{2} \|\nabla u \|_2^2-\lambda \int_{B_1^c}K(x)f(0)u-\frac{1}{\mu}\|\nabla u \|_2^2 +\frac{\lambda}{\mu}\int_{B_1^c}K(x)f(0)u\\
        &=\left (\frac{1}{2}-\frac{1}{\mu}\right )\|\nabla u \|_2^2-\lambda f(0)\left (1- \frac{1}{\mu}\right )\int_{B_1^c}K(x)u \\
        &\geq \left (\frac{1}{2}-\frac{1}{\mu}\right )\|\nabla u \|_2^2-C\lambda |f(0)|\left (1- \frac{1}{\mu}\right )\|\nabla u \|_2\\
        &=\|\nabla u \|_2 \left [\left (\frac{1}{2}-\frac{1}{\mu}\right )\|\nabla u \|_2-C\lambda |f(0)|\left (1- \frac{1}{\mu}\right )\right ].
    \end{aligned}
\end{equation*}
 Since $\mu>2$, from the above inequality and (\ref{eqn33}), it follows that for $\lambda>0$ small enough, there exists $K_2>0$ such that $\|\nabla u\|_2 \leq K_2 \lambda^{\frac{-1}{s-1}}$.
\end{proof}
\begin{lemma}
Define $S = \{ x \in B_1 ^c : u(x) \geq \epsilon \lambda^{-\frac{1}{s-1}} \}.$ Then there exists a  $\tilde{C}>0$ and $\overbar{\lambda}>0$ such that for all $\lambda \leq \overbar{\lambda},$ $|S| \geq \tilde{C}.$
\end{lemma}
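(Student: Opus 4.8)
The plan is to locate a fixed amount of ``mass'' of $u$ on the superlevel set $S$ by pitting the two-sided bound of Lemma~\ref{lem7} against the weighted embeddings of Theorem~\ref{embedding}. Fix a small $\epsilon>0$, to be pinned down below. First I would test the weak formulation of \eqref{Problem 1} with $v=u$; since $f(t)=g(t)+f(0)$ with $g\equiv0$ on $(-\infty,0]$ and $u=u^+-u^-$, this gives
\begin{equation*}
\|\nabla u\|_2^2=\lambda\int_{B_1^c}K(x)\,g(u^+)u^+\,dx+\lambda f(0)\int_{B_1^c}K(x)\,u^+\,dx-\lambda f(0)\int_{B_1^c}K(x)\,u^-\,dx .
\end{equation*}
Testing instead with $v=u^-$ and using $f(0)<0$ and the $r=1$ case of Theorem~\ref{embedding} yields $\|\nabla u^-\|_2\le C|f(0)|\lambda$, hence $\lambda|f(0)|\int_{B_1^c}K(x)u^-\,dx\le C\lambda^2$; moreover $\lambda f(0)\int_{B_1^c}K(x)u^+\,dx\le 0$. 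Rearranging and invoking the lower bound $\|\nabla u\|_2^2\ge K_1^2\lambda^{-2/(s-1)}$ of Lemma~\ref{lem7}, we obtain, for $\lambda\le\overbar{\lambda}$ small,
\begin{equation*}
\tfrac12 K_1^2\,\lambda^{-2/(s-1)}\le\lambda\int_{B_1^c}K(x)\,g(u^+)u^+\,dx .
\end{equation*}

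Next I would split the integral over $S$ and over $T:=\{x\in B_1^c:0<u(x)<\epsilon\lambda^{-1/(s-1)}\}$ (these two sets exhaust $\{u>0\}$, and the integrand vanishes on $\{u\le 0\}$). On $T$, assumption $(H3)$ gives $g(u^+)u^+\le B(u^+)^{s+1}\le B\epsilon^{s-1}\lambda^{-1}(u^+)^2$, so by the continuous embedding $\D_0^{1,2}(B_1^c)\hookrightarrow L^2(B_1^c;K(x))$ (with constant $C_0$) and the upper bound $\|\nabla u\|_2\le K_2\lambda^{-1/(s-1)}$ of Lemma~\ref{lem7},
\begin{equation*}
\lambda\int_T K(x)\,g(u^+)u^+\,dx\le B\epsilon^{s-1}C_0\,\|\nabla u\|_2^2\le B\epsilon^{s-1}C_0K_2^2\,\lambda^{-2/(s-1)} .
\end{equation*}
Now I fix $\epsilon$ once and for all so small that $B\epsilon^{s-1}C_0K_2^2\le\tfrac14 K_1^2$; absorbing the $T$-contribution leaves
\begin{equation*}
\tfrac14 K_1^2\,\lambda^{-2/(s-1)}\le\lambda\int_S K(x)\,g(u^+)u^+\,dx\le\lambda B\int_S K(x)\,(u^+)^{s+1}\,dx .
\end{equation*}

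On $S$ I would then apply the Cauchy--Schwarz inequality with weight $K$, the boundedness of $K$ furnished by $(H1)$, and the embedding $\D_0^{1,2}(B_1^c)\hookrightarrow L^{2(s+1)}(B_1^c;K(x))$ (again Theorem~\ref{embedding}):
\begin{equation*}
\int_S K(x)\,(u^+)^{s+1}\,dx\le\Big(\int_{B_1^c}K(x)\,(u^+)^{2(s+1)}\,dx\Big)^{1/2}\Big(\int_S K(x)\,dx\Big)^{1/2}\le C\,\|\nabla u\|_2^{\,s+1}\,|S|^{1/2}.
\end{equation*}
Inserting $\|\nabla u\|_2\le K_2\lambda^{-1/(s-1)}$ a last time, the right-hand side of the previous chain is bounded by $C\lambda^{\,1-(s+1)/(s-1)}|S|^{1/2}=C\lambda^{-2/(s-1)}|S|^{1/2}$, which carries exactly the same power of $\lambda$ as the left-hand side. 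Cancelling $\lambda^{-2/(s-1)}$ yields $|S|^{1/2}\ge\tilde c$ for some $\tilde c>0$ depending only on $K_1,K_2,B,s$ and the embedding constants, i.e.\ $|S|\ge\tilde C:=\tilde c^{\,2}$ for all $\lambda\le\overbar{\lambda}$.

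The genuinely delicate part is the scaling bookkeeping: each of the three contributions scales exactly like $\lambda^{-2/(s-1)}$, and for the argument to close one must use \emph{both} halves of Lemma~\ref{lem7} — the lower bound to guarantee that there is a definite amount of mass to be distributed, and the upper bound both to discard the contribution of the $f(0)$-terms and, crucially, to choose $\epsilon$ uniformly in $\lambda$ when absorbing the small-values part on $T$. A secondary point is to check that $u^+\not\equiv0$, so that the mass is genuinely positive; this follows once more from $\|\nabla u\|_2\gtrsim\lambda^{-1/(s-1)}$ together with $\|\nabla u^-\|_2\le C\lambda$.
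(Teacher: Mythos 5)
Your proposal is correct and follows essentially the same route as the paper: test the weak formulation with $u$, invoke both bounds of Lemma \ref{lem7}, split the $g(u)u$-integral over $S$ and its complement, absorb the small-values part by choosing $\epsilon$ small, and extract $|S|^{1/2}$ on $S$ via Cauchy--Schwarz together with the weighted $L^{2(s+1)}$ embedding. The only (immaterial) differences are bookkeeping: you dispose of the $f(0)$-terms up front using the sign of $f(0)$ and the bound $\|\nabla u^-\|_2\le C\lambda$, and you control the complement through the weighted $L^2$ embedding rather than through $\|K\|_{L^1}$ as the paper does; if anything, your $K^{1/2}\cdot K^{1/2}$ splitting in the Cauchy--Schwarz step invokes the weighted embedding more cleanly than the paper's $\|K\|_{L^2(S)}$ formulation.
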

\begin{proof}
Using Lemma \ref{lem7}, we have 
\begin{align*}
\begin{split}
K_1\lambda^{-\frac{2}{s-1}} \leq \| \nabla u \|_2^2 
&= \lambda \int_S K(x) g(u) u + \lambda \int_S K(x)f(0)u \\&\qquad  + \lambda \int_{B_1^c \setminus S} K(x) g(u) u + \lambda \int_{B_1^c \setminus S} K(x) f(0) u \\
& \leq C [ \lambda \| K \|_{L^2(S)} \| \nabla u \|_2 ^{s+1} + \lambda |f(0)|  \| \nabla u \|_2 \\ &\qquad + \lambda \epsilon^{s+1} \int_{B_1 ^c \setminus S} K(x)\lambda ^{-\frac{s+1}{s-1}}  + \lambda |f(0)| \epsilon \int_{B_1 ^c \setminus S} K(x)\lambda^{\frac{-1}{s-1}} ] \\
& \leq C [ \lambda^{\frac{-2}{s-1}}\| K \|_{L^2(S)}   + \lambda^{\frac{s-2}{s-1}}  + \epsilon ^{s+1} \lambda^{\frac{-2}{s-1}} \| K\|_{L^{1}(B_1 ^c \setminus S)} \\ &\qquad +  \epsilon \lambda^{\frac{s-2}{s-1}} \| K \|_{L^{1}(B_1 ^c \setminus S)} ] .
\end{split}
\end{align*}
Now, dividing by $\lambda ^{\frac{-2}{s-1}}$, the above inequality becomes
\begin{equation}
K_1 \leq C [ \| K \|_{L^2(S)} + \lambda^{\frac{s}{s-1}}  + \epsilon ^{s+1}  \| K\|_{L^{1}(B_1 ^c \setminus S)} +  \epsilon \lambda^{\frac{s}{s-1}} \| K \|_{L^{1}(B_1 ^c \setminus S)}] .
\end{equation} 
We can choose $\epsilon >0$ and $\lambda >0$ sufficiently small so as to get $ \| K \|_{L^2(S)}\geq \tilde{C} >0$. Therefore, there exists a $\tilde{C}>0$ and $\overbar{\lambda}>0$ such that for all $\lambda \leq \overbar{\lambda},$ $|S| \geq \tilde{C}.$
\end{proof}
	
Consider $R>1$ so that $|S \cap B(0,R)| >C_1>0$. We will now prove that $u$ is strictly positive in a small neighborhood of  $\partial B_1$. Let $\tilde{\gamma}>0$ and $\eta(x)$ be the inward unit normal at $x,$ where $x \in \partial B_1.$ Now define 
$$N_{\tilde{\gamma}}(\partial B_1) = \{ x+ \tilde{\beta} \eta(x) : \tilde{\beta} \in [0,\tilde{\gamma}) , x \in \partial B_1 \}.$$ It can be seen that $N_{\tilde{\gamma}}(\partial B_1)$ is an open neighborhood of $\partial B_1.$ Also, $|N_{\tilde{\gamma}} (\partial B_1)| \rightarrow 0$ as $\tilde{\gamma} \rightarrow 0$. Now, $\tilde{\gamma} >0$ can be chosen sufficiently small so that $|N_{\tilde{\gamma}}(\partial B_1)| < \frac{C_1}{2}$. Define $ \widetilde{K}= (S \cap B(0,R)) \setminus N_{\tilde{\gamma}} (\partial B_1)$. Then it follows that $|\widetilde{K}|>\frac{C_1}{2} >0$.
	
Let $G(x,y)$ denote the Green's function for problem (\ref{Problem 1}). Let $\xi_0 \in \partial B_1$ and fix a element $x_0 \in \widetilde{K}$ arbitrarily. The boundary point lemma ensures the existence of a $b_{\xi_0}>0$ such that $\frac{\partial G}{\partial \eta }(x_0, \xi_0) \geq b_{\xi_0}$. As $\tilde{K} \times \partial B_1$ is compact, we have $b>0$ and $\tilde{\epsilon}>0$ for which $\frac{\partial G}{\partial \eta}(x, \xi) \geq b$ for all $x \in \widetilde{K}$ and $\xi \in N_{\tilde{\epsilon}}(\partial B_1)$. Hence, for all $x \in \widetilde{K}$ and $\xi$ with $d(\xi, \partial B_1) < \tilde{\epsilon}$, where $\xi \rightarrow d(\xi, \partial B_1)$ denotes the distance function, there exists $C>0$ satisfying the inequality $G(x, \xi) \geq C d(\xi , \partial B_1).$ 
	
Now, $u(x)$ can be written as $u(x) = \int_{B_1 ^c} \lambda K(y) f(u(y)) G(x,y) dy$ for $x \in B_1 ^c.$
Let $\xi$  be such that $d(\xi, \partial B_1) < \tilde{\epsilon}.$ Then 
\begin{align*}
u(\xi) &= \int_{\widetilde{K}} \lambda K(y) f(u(y)) G(\xi,y) dy+ \int_{B_1^c \setminus \widetilde{K}} \lambda K(y) f(u(y)) G(\xi,y) dy \\
& \geq \lambda C d(\xi , \partial B_1) \int_{\widetilde{K}} K(y) [g(u)+f(0)] dy + \lambda f(0) \int_{B_1^c \setminus \widetilde{K}} K(y) G(\xi, y) dy \\
& \geq \lambda C d(\xi , \partial B_1) \left[ \int_{\widetilde{K}} K(y) u^s dy + f(0)|\widetilde{K}| \| K \|_{L^{\infty}(B_1^c)} \right] + \lambda f(0)  \int_{B_1 ^c} K(y)G(\xi , y) dy .
\end{align*}
Note that $e(x) \coloneq \int_{B_1^c} K(y) G(x,y) dy$ is the solution of the following problem
\begin{equation}
\begin{cases}
- \Delta e(x) = K(x) \hspace{.1cm}\mbox{ in } B_1 ^c,  \\
e(x)= 0 \hspace{1.1cm} \mbox{ on } \partial B_1.
\end{cases}
\end{equation}
and $e(x)$ is bounded at infinity. Therefore, $ \int_{B_1^c} K(y)G(\xi,y) dy \leq C d(\xi, \partial B_1)$ for some constant $C>0$. Combining the above estimates, we get
\begin{align*}
u(\xi) & \geq \lambda C d(\xi , \partial B_1) \left[ \int_{\widetilde{K}} \epsilon ^s \lambda^{\frac{-s}{s-1}} K(y)+ f(0)|\widetilde{K}| \| K \|_{L^{\infty}(B_1^c)}   +  f(0)  \right] \\
& \geq \lambda^{\frac{-1}{s-1}} \left[C - C \lambda^{\frac{s}{s-1}} \right] d(\xi , \partial B_1) \\
&\geq C \lambda^{\frac{-1}{s-1}} d(\xi, \partial B_1), 
\end{align*}
for $\lambda > 0$ sufficiently small and for all $\xi$ with $d(\xi, \partial B_1) < \tilde{\epsilon}$. From the continuity of $u$ and the distance function, it follows that  for all $\xi$ with $d(\xi, \partial B_1) = \tilde{\epsilon},$ $u(\xi) \geq C \tilde{\epsilon} \lambda^{\frac{-1}{s-1}}.$

To prove the positivity of $u$ outside this neighborhood, we first prove the following Lemma. 
\begin{lemma} 
\label{lower estimate}
    Consider the problem 
    \begin{equation} \label{positivity_proof_eqn}
\begin{cases}
- \Delta u &= h(x)\hspace{.1cm} \mbox { in } B_1 ^c, \\
u&=0 \hspace{.1cm} \mbox { on }  \partial B_1, \\
\end{cases}
\end{equation}
where $h$ is a nonnegative function. Let $u$ be a non negative solution of \eqref{lower estimate}, then there exists $C>0$ such that $u(x)\geq C$ for all $|x|\geq r,$ with $r>1$. 
\end{lemma}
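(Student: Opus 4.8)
The plan is to exploit that $-\Delta u=h\ge 0$ makes $u$ superharmonic, combined with a Phragm\'en--Lindel\"of type comparison adapted to the exterior domain in $\mathbb{R}^2$, where the harmonic function $\log|x|$ is unbounded. Throughout I will assume $h\not\equiv 0$, equivalently $u\not\equiv 0$ --- this is the only situation occurring in the applications, and the only one in which a positive lower bound can possibly hold --- and I will take $u$ to be continuous up to $\partial B_1$ and superharmonic in $B_1^c$, as the regularity already available guarantees; if one has only a weak solution, the steps below go through with superharmonicity understood distributionally.

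First I would record a pointwise positivity statement. Since $u\ge 0$ is superharmonic on the connected open set $B_1^c$ and $u\not\equiv 0$, the strong minimum principle yields $u>0$ everywhere in $B_1^c$. Now fix $r>1$. The sphere $\{|x|=r\}$ is a compact subset of $B_1^c$, so $m:=\min_{|x|=r}u$ is attained and $m>0$. The lemma will follow once I show $u\ge m$ on the whole unbounded region $\{|x|\ge r\}$, with $C=m$.

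To propagate the bound to infinity I would use a barrier. For $\epsilon>0$ put $w_\epsilon(x)=u(x)-m+\epsilon\log(|x|/r)$. Since $\log|x|$ is harmonic on $\mathbb{R}^2\setminus\{0\}$, $w_\epsilon$ is superharmonic on $\{|x|>r\}$; it is nonnegative on $\{|x|=r\}$; and, as $u\ge 0$ is bounded below while $\epsilon\log(|x|/r)\to+\infty$, one has $w_\epsilon\to+\infty$ as $|x|\to\infty$, hence $w_\epsilon\ge 0$ on $\{|x|=\rho\}$ whenever $\rho\ge r\,e^{m/\epsilon}$. Applying the minimum principle on the annulus $\{r\le|x|\le\rho\}$ gives $w_\epsilon\ge 0$ there; letting $\rho\to\infty$ gives $w_\epsilon\ge 0$ on $\{|x|\ge r\}$; and letting $\epsilon\to 0^+$ gives $u\ge m$ on $\{|x|\ge r\}$, which is the claim.

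The step I expect to be the genuine obstacle is the last one: on an exterior domain of $\mathbb{R}^2$ a superharmonic function vanishing on $\partial B_1$ need not be bounded below by its boundary minimum (take $-\log|x|$), so the proof must use both the sign $u\ge 0$ and the unbounded growth of $\log|x|$, which is exactly what the barrier $w_\epsilon$ encodes. As a fallback, closer to the rest of the paper, I would instead invoke the Green's function $G$ of Theorem \ref{Greens}: one checks $G(x,y)\to\frac{1}{2\pi}\log|y|>0$ as $|x|\to\infty$ for each fixed $y\in B_1^c$, so $\inf_{|x|\ge r,\ y\in\omega}G(x,y)>0$ for any compact $\omega\Subset B_1^c$; picking $\omega$ with $\int_\omega h>0$ and comparing $u$ with the Green potential of $h\chi_\omega$ then yields the bound. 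Since this still rests on a Phragm\'en--Lindel\"of comparison, I would keep the barrier argument as the primary route.
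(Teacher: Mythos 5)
Your argument is correct and is essentially the paper's own proof in a different parametrization: your barrier $m-\epsilon\log(|x|/r)$ on the annulus $r\le|x|\le \rho$ with $\rho= re^{m/\epsilon}$ is exactly the paper's comparison function $u_k(x)=\frac{s\log k}{\log k-\log r}\bigl(1-\frac{\log|x|}{\log k}\bigr)$ with $k=\rho$, and both proofs conclude by the maximum principle on expanding annuli followed by a limit. Your additional remark that one needs $u\not\equiv 0$ (via the strong minimum principle) to get $m>0$ is a useful point the paper leaves implicit.
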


\begin{proof}
\noindent Define $s=\displaystyle{\min_{|x|=r}} \; u(x)(>0)$ and let $k\in \mathbb{N}$ with $k>r.$ Let 
\begin{equation}\label{u_k}
    u_k (x):= \frac{s\log k}{\log k - \log r } \left(1-\frac{\log |x|}{\log k}\right).
\end{equation}
It can be seen that $u_k$ is a radial solution for the following problem  
\begin{equation} \label{positivity_proof_eqn1}
\begin{cases}
- \Delta u_k(x) &= 0\hspace{.1cm} \mbox { in } r<|x|<k, \\
u_k(x)&=s \hspace{.1cm} \mbox { if }  |x|=r, \\
u_k(x)&=0 \hspace{.1cm} \mbox { if }  |x|=k. \\
\end{cases}
\end{equation}
Also, it may be noted that $v(x)=\displaystyle{\lim_{k\to \infty}} u_k(x)=s$ is a radial solution of the problem given below.
\begin{equation} \label{positivity_proof_eqn2}
\begin{cases}
- \Delta v &= 0\hspace{.1cm} \mbox { in } B_r^c, \\
v&=s \hspace{.1cm} \mbox { on }  \partial B_r. \\
\end{cases}
\end{equation}

\noindent For any $x \in B_r^c,$ there exists $k\in \mathbb{N}$ such that $r<|x|<k.$ By maximum principle, $u_k(x) \leq u(x).$ As this is true for all $k>|x|, \displaystyle{\lim_{k\to \infty}} u_k(x) \leq u(x).$ Hence $u(x) \geq s.$ Since the choice of $x\in B_r^c$ was arbitrary, we can conclude that $u(x) \geq s$ for all $|x| \geq r.$\\
\end{proof} 

Now, let $w>0$ be the solution of the following problem 
\begin{equation} \label{positivity_proof_eqn3}
\begin{cases}
- \Delta w = -f(0)K(x)\hspace{.1cm} \mbox { in } B_1^c, \\
w=0 \hspace{2.1cm} \mbox { on }  \partial B_1.
\end{cases}
\end{equation}
It follows that $- \Delta (u+\lambda w)= \lambda K(x)g(u) \geq 0.$ From Lemma \ref{lower estimate}, we get the existence of a positive constant $C$ such that $u(x)+\lambda w (x) \geq C $ for all $|x| \geq 1+\tilde{\epsilon}.$ Since $w$ is bounded by Theorem \ref{regularity}, there exists $\tilde{C}>0$ such that
\begin{equation*}
    \begin{aligned}
        u(x)&=u(x) + \lambda w(x)- \lambda w(x)\\
        &\geq C- \lambda \tilde{C},
    \end{aligned}
\end{equation*}
for all $|x| \geq 1+\tilde{\epsilon}.$ For sufficiently small $\lambda $, this implies $ u(x) > 0 $ for all $|x| \geq  1+\tilde{\epsilon}.$ 
\qed
\\

{\bf \noindent Acknowledgement } \\

\noindent This research was supported by the Core Research Grant, awarded by the Science and Engineering Research Board, Department of Science and Technology, Government of India (CRG/2022/008985).
\providecommand{\bysame}{\leavevmode\hbox to3em{\hrulefill}\thinspace}
\providecommand{\MR}{\relax\ifhmode\unskip\space\fi MR }
\providecommand{\MRhref}[2]{%
  \href{http://www.ams.org/mathscinet-getitem?mr=#1}{#2}
}
\providecommand{\href}[2]{#2}

\end{document}